\newcommand{\e}{\varepsilon}
\newcommand{\N}{\mathds{N}}
\newcommand{\q}{\quad}
\newtheorem{thm}{Theorem}[section]
\newtheorem{lem}[thm]{Lemma}
\newtheorem{prop}[thm]{Proposition}
\newtheorem*{prim}{Prime $k$-tuples Conjecture}
\theoremstyle{definition}
\newtheorem*{defi}{Definition}
\theoremstyle{remark}
\newtheorem*{rema}{Remark}
\title{Some Results on a Conjecture of Hardy and Littlewood}
\author{Christian Axler}
\address{Institute of Mathematics\\ Heinrich-Heine University Düsseldorf\\
40225 Düsseldorf, Germany}
\email{christian.axler@hhu.de}
\date{\today}
\subjclass[2010]{Primary 11N05; Secondary 11A99}
\keywords{distribution of prime numbers, Hardy-Littlewood conjecture, Riemann hypothesis}
\begin{document}

\begin{abstract}
Let $m$ and $n$ be positive integers with $m,n \geq 2$. The second Hardy-Littlewood conjecture states that the number of primes in the interval $(m,m+n]$ is 
always less than or equal to the number of primes in the interval $[2,n]$. Based on new explicit estimates for the prime counting function $\pi(x)$, we 
give some new ranges in which this conjecture holds.
\end{abstract}

\maketitle

\section{Introduction}

The prime counting function $\pi(x)$ denotes the number of primes less or equal to $x$. In 1872, Lionnet \cite{lionnet} raised the question whether the 
inequality
\begin{equation}
\pi(2n) - \pi(n) \leq \pi(n) \tag{1.1} \label{1.1}
\end{equation}
holds for every integer $n \geq 2$. This means that for each integer $n \geq 2$ the interval $(n,2n]$ contains at most as many prime numbers as the interval 
$[2,n]$. A first progress concerning this question was done by Landau \cite[p.\:215--216]{landau}. He used the Prime Number Theorem, i.e.
\begin{displaymath}
\pi(x) = \frac{x}{\log x} + O \left( \frac{x}{\log^2x} \right)
\end{displaymath}
as $x \to \infty$, to show that \eqref{1.1} holds for every sufficiently large positive integer $n$. In 1923, Hardy and Littlewood \cite{hardy} conjectured that
\begin{displaymath}
\limsup_{n \to \infty} (\pi(x+n) - \pi(n)) \leq \pi(x)
\end{displaymath}
for every $x \geq 2$. From here it has been derived the well known conjecture (in the following denoted by HLC) that
\begin{equation}
\pi(m+n) \leq \pi(m) + \pi(n) \q\q \forall \, m,n \in \N \setminus \{ 1 \}. \tag{1.2} \label{1.2}
\end{equation}
Clearly, the HLC is a generalization of Lionnet's question \eqref{1.1}. Although the HLC could neither be proven nor disproved in general so far, some 
special cases can be shown. As a consequence of their explicit estimates for $\pi(x)$, Rosser and Schoenfeld \cite{rosser1975} stated without proof that
\begin{equation}
\pi(2x) - \pi(x) \leq \pi(x) \tag{1.3} \label{1.3}
\end{equation}
for every real $x \geq 3$. A detailed proof was finally given by Kopetzky and Schwarz \cite{kopetzky}. If we combine \eqref{1.3} with $\pi(4) - \pi(2) = 
\pi(2)$, it turns out that Lionnet's inequality \eqref{1.1} indeed holds for every integer $n \geq 2$.
Erdös \cite{erdos} reported that Ungar verified the HLC for every pair of 
integers $(m,n)$ satisfying $2 \leq \min(m,n) \leq 41$. One year later, Schinzel and Sierpi\'nski \cite{schinzel} could show that the inequality is fulfilled 
for every pair of integers $(m,n)$ with $2 \leq \min(m,n) \leq 132$. In a later paper, Schinzel \cite{schinzel2} extended this range to $2 \leq \min(m,n) \leq 
146$. The current best result in this direction was given by Gordan and Rodemich \cite{rodemich}. They found that the HLC is fullfilled for every pair of 
integers $(m,n)$ satisfying
\begin{equation}
2 \leq \min(m,n) \leq 1731. \tag{1.4} \label{1.4}
\end{equation}
The next result is due to Panaitopol \cite[Theorem 1]{pana1}. He showed that the HLC is true for every pair of integers $(m,n)$ satisfying $m, n \geq 2$ and
\begin{displaymath}
\frac{m}{29} \leq n \leq m.
\end{displaymath}
In \cite[Proposition 3]{dusart2}, Dusart improved the result of Panaitopol by showing that the HLC is true for every pair of positive real numbers $(x,y)$ 
satisfying $x, y 
\geq 3$ and
\begin{equation}
\frac{x}{109} \leq y \leq x. \tag{1.5} \label{1.5}
\end{equation}
Using explicit estimates for the prime counting function $\pi(x)$, we find the following improvement.

\begin{thm} \label{thm101}
Let $m$ and $n$ be integers satisfying $m, n \geq 2$ and $m/1950 \leq n \leq m$. Then we have
\begin{displaymath}
\pi(m+n) \leq \pi(m) + \pi(n).
\end{displaymath}
\end{thm}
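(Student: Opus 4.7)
The plan is to reduce the claim to an elementary analytic inequality in two variables by means of sharp explicit estimates for $\pi(x)$, and then to verify that inequality on the strip $1/1950 \leq n/m \leq 1$. Since the hypothesis forces $\min(m,n) = n$, the result of Gordon and Rodemich recorded in \eqref{1.4} already settles the case $n \leq 1731$. From now on I may therefore assume $n \geq 1732$.

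For the remaining range I would invoke explicit estimates of the shape $x/(\log x - b(x)) \leq \pi(x) \leq x/(\log x - a(x))$ with $a(x),b(x)\to 1$, valid from some moderate threshold on; when finer control is needed one can switch to bounds of the type $|\pi(x) - \tl(x)| \leq \e(x)$. Applying the upper estimate to $\pi(m+n)$ and the lower estimate to $\pi(m)$ and $\pi(n)$, and setting $t := n/m \in [1/1950, 1]$, the conjectured inequality is implied by a purely analytic inequality linking $m$ and $t$. Using $\log(m+n) = \log m + \log(1+t)$ and $\log n = \log m + \log t$, the principal term $(m+n)/\log(m+n) - m/\log m - n/\log n$ turns out to be strictly negative of order $m\log(1/t)/\log^2 m$, and this gain must dominate the correction coming from $a(x)$ and $b(x)$ uniformly for $t \in [1/1950, 1]$.

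This analytic argument should produce an explicit threshold $m_1$ above which the inequality holds for every admissible pair $(m,n)$. The remaining finite range $1732 \leq n \leq m < m_1$ would then be disposed of either by a direct numerical check using stored values of $\pi(x)$, or by splitting $(m, m+n]$ into subintervals to which \eqref{1.3} and \eqref{1.4} can be applied. The main obstacle is precisely the jump from Dusart's constant $109$ in \eqref{1.5} to $1950$: this forces the use of considerably sharper explicit estimates for $\pi(x)$ than those available to Dusart, and requires the slack in the functions $a(x), b(x)$ to remain small enough even at moderate $x$ to keep $m_1$ within computational reach. The delicate balance between the sharpness of the explicit bounds, the worst-case ratio $t = 1/1950$, and the size of the numerically verifiable range is the heart of the argument.
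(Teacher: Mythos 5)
Your high-level plan — apply explicit two-sided estimates for $\pi$, reduce to an analytic inequality in $m$ and $t = n/m$, derive a threshold $m_1$, and treat the finite range separately — is indeed the paper's strategy (Proposition \ref{prop301} is exactly such an analytic reduction, with a lower bound $\pi(t) \geq t/(\log t - 1 - 1/\log t)$ from \cite{axler16} and an upper bound $\pi(t) \leq t/(\log t - 1 - 1.15/\log t)$ from \cite{axler2017}). But there are two substantive problems.

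First, your order-of-magnitude estimate for the gain is off by a factor of $t$. The quantity $\frac{m}{\log m} + \frac{n}{\log n} - \frac{m+n}{\log(m+n)}$ is positive of order $n\,\frac{\log(m/n)+1}{\log n\,\log m}$, i.e.\ $mt\log(1/t)/\log^2 m$, not $m\log(1/t)/\log^2 m$. At the endpoint $t = 1/1950$ this gain is only about $10^{-5}m$, while the slack consumed by the correction terms $a(x),b(x)$ (here $a=1$, $b=1.15$) is of comparable size $\approx 0.15\,m/\log^3 m$. These two only balance around $m \approx 4\times 10^{10}$. So the threshold $m_1$ you obtain is not ``within computational reach'' in the naive sense — it is roughly $3.8\times 10^{10}$ (see the table in the paper's proof). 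Your estimate would lead you to expect $m_1$ is tiny, which obscures the real difficulty.

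Second, and more seriously, you do not have a workable mechanism for the finite range $m < m_1$ once $m_1$ is that large. ``A direct numerical check using stored values of $\pi(x)$'' over all admissible $(m,n)$ with $m \leq 4\times 10^{10}$ is a quadratic-size computation and is infeasible; your alternative of ``splitting $(m,m+n]$ into subintervals to which \eqref{1.3} and \eqref{1.4} can be applied'' is not spelled out and does not obviously produce the needed bound for, say, $m \sim 10^{10}$ and $n \sim m/1950$. The paper instead relies crucially on Segal's equivalence (Lemma \ref{lem201}: the HLC holds up to $p_k$ iff $p_k \geq p_{k-q} + p_{q+1} - 1$ for the relevant $q$) together with Panaitopol's reduction of the range of $q$ (Lemma \ref{lem203}); this turns the finite verification into a single linear pass over the primes up to $\sim 4\times 10^{10}$, which is Proposition \ref{prop204}. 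Without this (or an equivalent) reduction, the finite range cannot be closed. Relatedly, you omit the step of partitioning the ratio interval $[1/1950,1/109]$ into several subintervals $[1/r,1/s]$ and choosing each $(r,s)$ so that the corresponding analytic threshold $x_0(r,s)$ stays just below $p_{N_0}$; using a single inequality uniform over the whole strip would push $m_1$ beyond what the computation of Proposition \ref{prop204} covers.
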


In 1975, Udrescu \cite{udrescu} found the following generalization. Under the assumption that $n$ satisfies $\e m \leq n \leq m$, where $\e$ is a real number 
with $0 < \e \leq 1$, he showed that the HLC holds for every sufficiently large positive integer $m$. Dusart \cite{dusart2} showed that Udrescu's result holds 
for every integer $m \geq e^{3.1/\log(1+\e)}$. We give the following improvement.

\begin{thm} \label{thm102}
Udrescu's result holds for every integer
\begin{displaymath}
m \geq e^{\sqrt{0.3426/\log(1+\e)}}.
\end{displaymath}
\end{thm}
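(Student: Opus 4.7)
The plan is a case analysis in the ratio $n/m$. In the easier regime $n \geq m/1950$, Theorem~\ref{thm101} applies directly, regardless of the size of $m$, and we are done. Hence we may restrict to the regime $n < m/1950$, which automatically forces $\varepsilon < 1/1950$ and thereby guarantees that the threshold $e^{\sqrt{0.3426/\log(1+\varepsilon)}}$ is large enough for the asymptotic $\pi$-estimates of the paper to be effective.

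In this second regime I would apply the sharp explicit bounds for $\pi(x)$ from the earlier sections: an upper bound for $\pi(m+n)$ and matching lower bounds for $\pi(m)$ and $\pi(n)$, each of the general shape
\[
\pi(x) = \frac{x}{\log x}\left(1 + \frac{1}{\log x} + \frac{2}{\log^2 x} + \frac{C(x)}{\log^3 x}\right),
\]
with explicit $C(x)$ in the admissible direction. Setting $t = n/m \in [\varepsilon, 1/1950)$ and $L = \log m$, substituting and Taylor-expanding the three logarithms around $L$ reduces the Hardy--Littlewood inequality to an elementary rational inequality whose slack has leading term $(m/L^2)\bigl((1+t)\log(1+t) - t\log t\bigr)$. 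The function $g(t) := (1+t)\log(1+t) - t\log t$ has derivative $\log(1+1/t) > 0$ on $(0,1]$, so its infimum over the admissible range is attained at $t = \varepsilon$, and $g(\varepsilon)$ is comparable to $\log(1+\varepsilon)$ for small $\varepsilon$.

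The residual error in this reduction is of order $m/L^4$ with an explicit constant coming from the $C(x)$ terms. Requiring the leading gain $(m/L^2)\,g(\varepsilon)$ to dominate this error produces an inequality of the form $(\log m)^2 \log(1+\varepsilon) \geq c_0$ for some explicit $c_0$; the goal is to show that $c_0 = 0.3426$ is admissible, which is exactly the stated threshold after inversion.

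The main obstacle I expect is the careful simultaneous optimization: one must verify the reduced rational inequality uniformly for all $(t, L)$ with $t \in [\varepsilon, 1/1950)$ and $L \geq \sqrt{0.3426/\log(1+\varepsilon)}$, tracking the sub-leading corrections precisely enough to pin down the value $0.3426$ rather than a larger, crude constant. Once those constants are in hand, the threshold falls out of the algebra, and it is the gain in the analysis of $g$ near its infimum $g(\varepsilon) \asymp \log(1+\varepsilon)$ that is responsible for the quadratic exponent $\sqrt{\cdot}$ in the threshold, in contrast with Dusart's linear exponent.
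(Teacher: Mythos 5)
Your proposal takes a genuinely different route from the paper. The paper does not Taylor-expand $\pi(m)+\pi(n)-\pi(m+n)$ as a whole; instead it works with Panaitopol-type bounds $\pi(x)\gtrless x/(\log x - 1 - 1/\log x - a_2/\log^2 x - a_3/\log^3 x)$ from \cite{axler16,axler2017}, whose lower- and upper-bound coefficients differ only by $3.15-2.85=0.3$ and $14.25-13.15=1.1$. It then establishes two separate denominator comparisons: $\pi(m)\geq m/D(m+n)$ because $\log(m+n)-\log m\geq\log(1+\e)\geq 0.3426/\log^2 m\geq 0.3/\log^2 m+1.1/\log^3 m$, and $\pi(n)\geq n/D(m+n)$ because $\log(m+n)-\log n\geq\log 2$ trivially swallows the same coefficient gap; summing against the upper bound $\pi(m+n)\leq(m+n)/D(m+n)$ finishes the proof. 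The constant $0.3426$ thus comes directly from the coefficient gap $0.3+1.1/\log m$ with the numerical lower bound $\log m\geq 25.84\ldots$ supplied by the case $\e<1/1950$ --- not from any property of your function $g$.

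Two points in your sketch deserve a warning. First, the assertion $g(\e)\asymp\log(1+\e)$ is false: $g(\e)-\log(1+\e)=\e\log(1+1/\e)$, so $g(\e)/\log(1+\e)\to\infty$ as $\e\to 0$. Fortunately the inequality $g(\e)\geq\log(1+\e)$ goes in the favorable direction, so a condition of the form $L^2 g(\e)\geq c$ is indeed implied by $L^2\log(1+\e)\geq c$; your proof would still produce \emph{a} constant, but it is not true that $0.3426$ ``falls out of the algebra'' via a near-equality of $g(\e)$ and $\log(1+\e)$. Second, your error estimate ``$O(m/L^4)$'' hides a real difficulty: in the critical regime $\e\sim 1/L^2$ one has $|\log t|\sim 2\log L$, so the ratio $\log t/L$ is not a clean power of $1/L$, and the expansion of $1/(L+\log t)$ generates terms in $m\log^j L/L^{4+j}$ that must be controlled against a leading gain that itself is only of size $m\log L/L^4$. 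The paper's formulation avoids this entirely: by comparing $D(m+n)$ with the lower-bound denominator at $n$ via $\log(m+n)-\log n\geq\log 2$, no expansion in $\log(n/m)/\log m$ is ever needed. Your route is plausible but would require considerably more care on exactly this point, and the constant you ultimately reach may or may not coincide with $0.3426$.
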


In \cite{pana2}, Panaitopol \cite{pana2} used explicit estimates for the prime counting function $\pi(x)$ to get that the HLC is true for all positive integers 
$m,n \geq 2$ with
\begin{equation}
\pi(m) \leq n \leq m. \tag{1.6} \label{1.6}
\end{equation}
Since $\pi(x) \sim x/\log x$ as $x \to \infty$, the last result yields an improvement of Theorem \ref{thm101} for all sufficiently large values of $m$. In this 
paper, we find the following refinement of \eqref{1.6}.

\begin{thm} \label{thm103}
Let $c_0 = 0.70881678090424862707121$. Then we have $\pi(m+n) \leq \pi(m) + \pi(n)$ for all integers $m \geq n \geq 2$ with $n \geq c_0m/\log^2 m$.
\end{thm}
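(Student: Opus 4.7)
My plan is to reduce to the regime where $m$ is extremely large, then finish by invoking explicit estimates for $\pi(x)$. By Theorem~\ref{thm101} the inequality $\pi(m+n) \leq \pi(m) + \pi(n)$ is already known for $n \geq m/1950$, so only the range $c_0 m/\log^2 m \leq n < m/1950$ has to be handled. This range is empty unless $\log^2 m > 1950\,c_0 \approx 1382$, forcing $m > e^{37.2}$; in particular $n$ itself is very large, and all sharp explicit $\pi$-bounds from Dusart's and the author's own earlier tables are available.

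Next I would rewrite the goal as $\pi(m+n) - \pi(m) \leq \pi(n)$ and substitute an upper bound of shape $\pi(x) \leq \frac{x}{\log x}\bigl(1 + \frac{1}{\log x} + \frac{2}{\log^2 x} + \frac{a_3}{\log^3 x} + \cdots\bigr)$ for $\pi(m+n)$, the matching lower bound for $\pi(m)$, and a similar lower bound (with possibly fewer terms) for $\pi(n)$. Since $n < m/1950$, many of the lower-order terms in the difference $\frac{m+n}{\log(m+n)} - \frac{m}{\log m}$ and its higher-order analogues telescope, leaving $n/\log m$ as the leading contribution. The critical ingredient is the comparison of $1/\log n$ with $1/\log m$: from $n \geq c_0 m / \log^2 m$ one has $\log n \geq \log m - 2\log\log m + \log c_0$, hence
\begin{displaymath}
\frac{1}{\log n} \geq \frac{1}{\log m} + \frac{2\log\log m - \log c_0}{\log^2 m} + \mathrm{l.o.t.}
\end{displaymath}
After cancelling common terms the required inequality takes the shape $A(\log m) \geq 0$ for a smooth function $A$ depending on $c_0$ and on the explicit coefficients of the $\pi$-bounds.

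The value $c_0 = 0.70881678\ldots$ is then forced by demanding $A \geq 0$ throughout $m \geq e^{37.2}$; it is essentially the worst-case value of a transcendental condition in $\log m$. The main obstacle I anticipate is quantitative precision: pinning $c_0$ down to this specific constant rather than to a slightly larger one requires using explicit $\pi$-estimates that are sharp through order $1/\log^4 x$, and then proving monotonicity of $A$ in $\log m$ so that a single extremal point controls the entire range. Once that monotonicity is in hand, the final step reduces to a numerical check at one point, and any residual gap between the reduction threshold $e^{37.2}$ and the point where the asymptotic analysis becomes clean would be covered either by taking one more term in the expansion of $\pi$ or by direct verification against tabulated $\pi$-values.
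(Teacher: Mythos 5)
Your opening reduction is exactly right and matches the paper: Theorem~\ref{thm101} handles $n \geq m/1950$, so only $c_0 m/\log^2 m \leq n < m/1950$ remains, which forces $\log^2 m > 1950\,c_0$, i.e.\ $m \gtrsim 1.4\times 10^{16}$. This is precisely why the paper can fall back on Theorem~\ref{thm101} for all $m$ below its threshold $\beta_2 = 14\,000\,264\,036\,190\,262 \approx e^{37.18}$.

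Where you diverge is in how you close the large-$m$ case, and here your plan is both technically different and, as written, incomplete. The paper does not estimate $\pi(m+n) - \pi(m)$ by a Taylor/mean-value expansion of a numerator-form bound $\tfrac{x}{\log x}\bigl(1 + \tfrac{1}{\log x} + \cdots\bigr)$. Instead it invokes Proposition~\ref{prop401}, built on Panaitopol-type \emph{denominator-form} bounds $\pi(t) \gtrless t/(\log t - 1 - \sum_{j} a_j/\log^j t - \cdots)$ with $k=2$, $a_1=1$, $a_2=2.85$. The argument keeps a single common denominator $D = \log(m+n) - 1 - \sum a_j/\log^j(m+n) - \e/\log^2(m+n)$ and splits $\pi(m+n) \leq \tfrac{m}{D} + \tfrac{n}{D}$. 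The piece $\tfrac{m}{D} \leq \pi(m)$ comes from $\log(m+n) - \log m \geq \log(1 + c_0/\log^2 m) \geq \e/\log^2 m$ (this is where $c_0$ and $\e = 0.708635\ldots$ are tied together via the elementary $\log(1+t) \geq t - t^2/2$, and where the constant $c_0$ actually comes from: the threshold $\exp(\sqrt{c_0^2/(2(c_0-\e))})$ is engineered to equal $\beta_2$). The piece $\tfrac{n}{D} \leq \tfrac{n}{\log n - 1} \leq \pi(n)$ uses only $m \geq n$ and $\log 2 \geq \sum a_j/\log^j(m+n) + \e/\log^2(m+n)$, which is trivially true for large $m$. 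No telescoping of lower-order terms, no monotonicity lemma for an auxiliary function $A(\log m)$, and no tracking of $1/\log n$ versus $1/\log m$ to order $1/\log^4$ is required.

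Concretely, the gaps in your sketch are: (i) you never write down which explicit $\pi$-bound you would use, and the numerator-form expansion you propose does not match the bounds actually available from the cited references; (ii) the monotonicity of your function $A$ and the claim that ``a single extremal point controls the entire range'' are asserted but not established, and are non-trivial given that $A$ would depend on $\log\log m$ as well as $\log m$; (iii) the specific value $c_0 = 0.70881678\ldots$ is not derived; the paper obtains it by forcing the large-$m$ threshold of Proposition~\ref{prop401} to coincide with $\beta_2$, which also makes the small-$m$ fallback seamless. Your plan could probably be pushed through with enough work, but it would be considerably messier than the common-denominator split, and as it stands it is a roadmap rather than a proof.
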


In the case where $m+n \leq 10^{20}$, we can use some recent results concerning the distance of $\pi(x)$ and the \emph{logarithmic integral} $\text{li}(x)$, 
which is defined for every real $x > 1$ as
\begin{displaymath}
\text{li}(x) = \int_0^x \frac{\text{d}t}{\log t} = \lim_{\varepsilon \to 0+} \left \{ \int_{0}^{1-\varepsilon}{\frac{\text{d}t}{\log t}} + 
\int_{1+\varepsilon}^{x}{\frac{\text{d}t}{\log t}} \right \},
\end{displaymath}
to get the following result.

\begin{thm} \label{thm104}
Let $c_1 = 2(1-\log 2) = 0.6137 \ldots$. Then we have $\pi(m+n) \leq \pi(m) + \pi(n)$ for all integers $m \geq n \geq 2$ satisfying $m + n \leq 
10^{20}$ and
\begin{displaymath}
n \geq 2\sqrt{m} \left( 1 - \frac{2c_1}{\log m + c_1} \right).
\end{displaymath}
%
%
\end{thm}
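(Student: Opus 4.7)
The plan is to reduce the HLC to a direct comparison of $\pi(m+n)-\pi(m)$ with $\pi(n)$ and then apply sharp explicit estimates for $\pi(x)$ in terms of $\text{li}(x)$ that are available in the computational range $x \leq 10^{20}$. Concretely, in this range one has unconditionally $\pi(m+n) \leq \text{li}(m+n)$ (from recent computational verifications due to Büthe, Platt and others), together with an explicit reverse estimate of the shape $\pi(m) \geq \text{li}(m) - 2\sqrt{m}/\log m$ valid in the same range --- this is precisely the type of recent result to which the introduction alludes.

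Combining these two with the elementary mean-value bound
\begin{displaymath}
\text{li}(m+n) - \text{li}(m) = \int_m^{m+n}\frac{dt}{\log t} \leq \frac{n}{\log m}
\end{displaymath}
immediately yields $\pi(m+n) - \pi(m) \leq (n + 2\sqrt{m})/\log m$. Inserting a Dusart-type lower bound of the shape $\pi(n) \geq n/(\log n - 1)$ (valid for $n$ above an explicit threshold), the HLC reduces to the algebraic inequality
\begin{displaymath}
F(n) := n(\log m - \log n + 1) - 2\sqrt{m}(\log n - 1) \geq 0.
\end{displaymath}
Writing $n = 2\alpha\sqrt{m}$, this becomes $F = 2\sqrt{m}\bigl[((\alpha-1)/2)\log m + (\alpha+1)(1 - \log 2 - \log\alpha)\bigr]$. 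The constant $c_1 = 2(1-\log 2)$ is engineered precisely so that the choice $\alpha = (\log m - c_1)/(\log m + c_1)$ causes the two $\log m$-linear terms to cancel, leaving only the explicitly positive residue $F = -2\sqrt{m}(\alpha+1)\log\alpha > 0$; this $\alpha$ is exactly the lower endpoint of the hypothesis interval rewritten as $n \geq 2\sqrt{m}(1 - 2c_1/(\log m + c_1))$.

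The main obstacle is then to propagate the sign $F \geq 0$ from the boundary to the whole range $2\sqrt{m}(1 - 2c_1/(\log m + c_1)) \leq n \leq m$. Since $F'(n) = \log(m/n) - 2\sqrt{m}/n$ vanishes at two distinct points $n_1 < m/e < n_2$ once $m$ is not too small, $F$ decreases, then increases, then decreases again, so one must check separately that $F \geq 0$ at the upper endpoint $n = m$ (where it reduces to the easy estimate $\sqrt{m} \geq 2\log m - 2$) and confirm that the minimum over the hypothesis interval is attained at its lower endpoint. A secondary technicality is covering the small-$m$ regime where either the bound on $\text{li}(x) - \pi(x)$ or the Dusart lower bound $\pi(n) \geq n/(\log n - 1)$ fails: those cases are absorbed by the already-established ranges \eqref{1.4} or Theorem \ref{thm101}.
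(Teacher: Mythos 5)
Your proposal is correct and its core is the same as the paper's: both rest on Dusart's two bounds $\pi(x)\leq \text{li}(x)$ and $\pi(x)\geq \text{li}(x)-2\sqrt{x}/\log x$ for $x\leq 10^{20}$, the mean value estimate $\text{li}(m+n)-\text{li}(m)\leq n/\log m$, and the lower bound $\pi(n)\geq n/(\log n-1)$, which reduce everything to exactly your inequality $F(n)=n(\log(m/n)+1)-2\sqrt{m}(\log n-1)\geq 0$; your observation that $c_1=2(1-\log 2)$ makes $F$ vanish up to the positive residue $-2\sqrt m(\alpha+1)\log\alpha$ at $n=2\sqrt m\,(\log m-c_1)/(\log m+c_1)$ is precisely the paper's Case 4 identity $2\sqrt m(1-2c_1/(\log m+c_1))=2\sqrt m\,\frac{\log(2\sqrt m)-1}{\log(\sqrt m/2)+1}$. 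Where you genuinely differ is the endgame: the paper first caps $n$ at $c_0m/\log^2 m$ by invoking Theorems \ref{thm101} and \ref{thm103} and then verifies $F\geq 0$ through four ad hoc sub-ranges of $n$, whereas you establish the sign of $F$ on the whole interval $[\,2\sqrt m(1-2c_1/(\log m+c_1)),\,m\,]$ at once from its critical-point structure ($F'(n)=\log(m/n)-2\sqrt m/n$ with a single local minimum $n_1\approx 4\sqrt m/\log m$ below the interval and a local maximum $n_2$ inside it) plus the two endpoint checks, the upper one being $\sqrt m\geq 2(\log m-1)$; this avoids the case split and does not need Theorem \ref{thm103} as input. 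To make it complete you still must (i) verify $F'>0$ at the lower endpoint, i.e. $n_1$ really lies below the interval (immediate, since there $\log(m/n)\approx\log(\sqrt m/2)$ while $2\sqrt m/n\approx 1$), and (ii) handle the thresholds $m\geq 1\,090\,877$ for the li-lower bound and $n\geq 5393$ for Dusart's bound: your fallback works, because under the hypothesis $n\gtrsim 1.8\sqrt m$ either failure forces $m\lesssim 10^7$, hence $n\geq m/1950$ and Theorem \ref{thm101} applies (the paper instead disposes of the small range via the computational Proposition \ref{prop204}).
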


Finally, we find the following result which depends on the correctness of the Riemann hypothesis.

\begin{thm} \label{thm105}
Let $c_2 = 1/(4\pi)$. If the Riemann hypothesis is true, then $\pi(m+n) \leq \pi(m) + \pi(n)$ for all integers $m \geq n \geq 2$ satisfying $n \geq c_2 
\sqrt{m}\log m \log(m\log^8m)$.
\end{thm}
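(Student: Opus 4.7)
The plan is to apply Schoenfeld's classical conditional estimate,
\[
|\pi(x) - \text{li}(x)| < \frac{\sqrt{x}\log x}{8\pi}\qquad (x\ge 2657),
\]
valid under the Riemann hypothesis, and to dispose of $m+n \le 10^{20}$ through the unconditional Theorem \ref{thm104}. The coefficient $c_2 = 1/(4\pi) = 2/(8\pi)$ in the hypothesis is the natural one to expect, since two of the three Schoenfeld error terms (those attached to $\pi(m+n)$ and $\pi(m)$) dominate.

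If $m+n \le 10^{20}$, then either $n \le 1731$, in which case Gordan and Rodemich's bound \eqref{1.4} covers the claim, or $n > 1731$, in which case $m \ge n > 1731$ forces $c_2\log m\log(m\log^8 m) > 2$, so the hypothesis of Theorem \ref{thm104} is implied and the conclusion follows from that theorem. In the remaining case $m+n > 10^{20}$, the integers $n$, $m$, $m+n$ all lie far above $2657$, and applying Schoenfeld's bound to each of $\pi(m+n)$, $\pi(m)$, $\pi(n)$ reduces $\pi(m+n) \le \pi(m) + \pi(n)$ to
\[
D := \text{li}(n) - \bigl(\text{li}(m+n) - \text{li}(m)\bigr) \ \ge\ \frac{\sqrt{m+n}\log(m+n) + \sqrt{m}\log m + \sqrt{n}\log n}{8\pi} =: S.
\]
To bound $D$ from below I would combine $\text{li}(m+n) - \text{li}(m) = \int_m^{m+n}\text{d}t/\log t \le n/\log m$ with the sharpened asymptotic $\text{li}(n) \ge n/\log n + n/\log^2 n$; to bound $S$ from above I would use $n\le m$, together with $\sqrt{m+n} \le \sqrt{m}(1+n/(2m))$ and $\log(m+n) \le \log m + n/m$, in order to reduce $\sqrt{m+n}\log(m+n)$ to $\sqrt{m}\log m$ plus a small remainder, and observe that $\sqrt{n}\log n$ is negligible in our range.

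Write $L = \log m$, $L_1 = \log\log m$, and $n_0 := c_2\sqrt{m}L\log(m\log^8 m)$. Throughout the range $n_0 \le n \le c_0 m/\log^2 m$ one has $D'(n) \gg S'(n)$, and for $n > c_0 m/\log^2 m$ the unconditional Theorem \ref{thm103} already yields the conclusion, so it suffices to verify $D \ge S$ at $n = n_0$. Using the expansion $\log n_0 = L/2 + 2L_1 - \log(4\pi) + O(L_1/L)$, both $D(n_0)$ and $S(n_0)$ come out to $\sqrt{m}L/(4\pi)$ at leading order; the factor $\log(m\log^8 m) = L + 8L_1$ in the hypothesis (rather than the plain $\log m$) is precisely what cancels a $-8L_1$ correction that would otherwise destroy the leading-order match, and the bonus $n/\log^2 n \sim \sqrt{m}/\pi$ from the sharpened $\text{li}$-estimate, together with a constant residue $(4\log(4\pi)/(4\pi))\sqrt{m}$, produces a positive asymptotic slack of order $\sqrt{m}$. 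The main obstacle will be quantitative: this slack approaches its asymptotic value only slowly (the $O(L_1/L)$ and $O(L_1^2/L)$ corrections remain non-negligible at $L \approx 46$, i.e.\ $m \approx 10^{20}$), so the inequality $D(n_0) \ge S(n_0)$ must be checked effectively for every $m \ge 5\cdot 10^{19}$, not merely in the limit $m\to\infty$.
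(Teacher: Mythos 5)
Your outline follows the same basic strategy as the paper: an RH-conditional bound of Schoenfeld type for $|\pi-\text{li}|$, the mean value theorem to replace $\text{li}(m+n)-\text{li}(m)$ by $n/\log m$, reduction of $m+n\le 10^{20}$ to Theorem \ref{thm104} (and \eqref{1.4}), and reduction of $n\ge c_0m/\log^2m$ to Theorem \ref{thm103}. The differences are in execution. The paper uses Dusart's conditional refinement (Proposition \ref{prop701}), $|\pi(x)-\text{li}(x)|\le\frac{\sqrt{x}}{8\pi}\log(x/\log x)$, applies it only at $m$ and $m+n$ (with a mean-value bound for the increment of the error term itself), and bounds $\pi(n)$ from below by the unconditional $n/(\log n-1)$ of Dusart; you invoke the conditional bound at all three points and use $\text{li}(n)\ge n/\log n+n/\log^2 n$. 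Since the error at $n$ is only $O(m^{1/4}\log^2m)$, this is harmless, but taking $\log x$ instead of $\log(x/\log x)$ gives away roughly $\frac{\sqrt{m}}{4\pi}\log\log m$ of slack that the paper keeps. Also, instead of the paper's three explicit sub-ranges of $n$ (with intermediate thresholds $c_2\sqrt{m}\log^3m$ and $c_2\sqrt{m}\log m\log(m\log^{13}m)$, each settled by a sufficient inequality such as \eqref{7.3} and \eqref{7.4}), you check the key inequality only at $n_0=c_2\sqrt{m}\log m\log(m\log^8m)$ and propagate it by monotonicity of $D-S$ in $n$, which is cleaner. Your asymptotic bookkeeping is right: the $8\log\log m$ in the hypothesis exactly offsets the loss in $1/\log n_0-1/\log m$, and the residual slack tends to $\frac{\sqrt{m}}{4\pi}(4+4\log 4\pi)$.

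The one genuine gap is the step you yourself flag: the effective verification of $D(n_0)\ge S(n_0)$ for all $m\ge 5\times10^{19}$ is the crux and is not carried out, and with Schoenfeld's bound it is tighter than you may expect. At $L=\log m\approx 45.4$ a careful evaluation gives $D(n_0)\ge\frac{\sqrt{m}}{4\pi}(L+4.7)$ against $S(n_0)\le\frac{\sqrt{m}}{4\pi}(L+0.01)$, so the inequality does survive (with margin under ten percent, growing in $m$), but crude simplifications destroy it: for instance, bounding $\frac{1-u}{1+u}$ below by $1-2u$ with $u=2A/L\approx 0.25$ (where $A=\log n_0-\frac12\log m$) already loses about $\frac{\sqrt{m}}{4\pi}\cdot 7$, more than the whole margin. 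So to turn your sketch into a proof you must either do this boundary estimate with exact rational/numerical control of the second-order terms for all $m\ge 5\times10^{19}$, or recover breathing room the way the paper does, by using Proposition \ref{prop701} instead of Schoenfeld and the unconditional lower bound for $\pi(n)$, which is precisely what makes the paper's case analysis close comfortably.
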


%

\section{On a result of Segal}

In 1962, Segal \cite[Theorem I]{segal} obtained the following inequality condition involving only prime numbers which is equivalent to the HLC. Here, as 
usual, $p_r$ denotes the $r$th prime number.

\begin{lem}[Segal] \label{lem201}
The \emph{HLC} is true if and only if
\begin{equation}
p_k \geq p_{k-q} + p_{q+1} - 1 \tag{2.1} \label{2.1}
\end{equation}
for all integers $k,q$ satisfying $k \geq 3$ and $1 \leq q \leq (k-1)/2$.
\end{lem}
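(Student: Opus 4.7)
The plan is to exhibit the equivalence by translating back and forth between \eqref{1.2} and \eqref{2.1}, using the elementary dualities $\pi(p_r-1) = r-1$ and ``$\pi(x) \geq r$ iff $x \geq p_r$''.

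For the forward direction I would assume the HLC holds and fix integers $k \geq 3$ and $1 \leq q \leq (k-1)/2$. Specialising to $n = p_{q+1}-1$ and $m = p_k - p_{q+1} + 1$ gives $m+n = p_k$, so that $\pi(m+n) = k$ and $\pi(n) = q$. The upper bound $q \leq (k-1)/2$ combined with $k \geq 3$ forces $q+1 \leq k-1$, whence $p_{q+1} \leq p_{k-1}$ and $m \geq p_k - p_{k-1} + 1 \geq 2$; also $n \geq p_2 - 1 = 2$. Applying \eqref{1.2} to this admissible pair yields $\pi(m) \geq k-q$, which by the duality above is equivalent to $m \geq p_{k-q}$, i.e.\ precisely \eqref{2.1}.

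For the reverse direction I would assume \eqref{2.1} and, given $m, n \geq 2$ with (without loss of generality) $m \geq n$, set $k = \pi(m+n)$ and $q = \pi(n)$. Since $m+n \geq 4$ and $n \geq 2$, one has $k \geq 2$ and $q \geq 1$, and $m \geq n$ yields $\pi(m) \geq q$. If $2q \geq k$, the HLC is immediate from $\pi(m) + \pi(n) \geq 2q \geq k$. Otherwise $q \leq (k-1)/2$ and $k \geq 3$, so \eqref{2.1} is available: combining $m+n \geq p_k$ with $n \leq p_{q+1}-1$ gives $m \geq p_k - p_{q+1} + 1 \geq p_{k-q}$, whence $\pi(m) \geq k-q$ and $\pi(m+n) \leq \pi(m) + \pi(n)$.

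The argument will be essentially bookkeeping; the one delicate point will be to pick the specialisation $(m,n) = (p_k - p_{q+1}+1, p_{q+1}-1)$ so that both entries lie in the domain $\geq 2$ of the HLC, which is exactly where the sharp bound $q \leq (k-1)/2$, rather than the naive $q \leq k/2$, is needed. The complementary regime $q > (k-1)/2$ in the reverse direction is then handled for free by the monotonicity bound $\pi(m) \geq \pi(n)$ recorded above, so no additional input is required.
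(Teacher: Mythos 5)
The paper does not actually prove Lemma \ref{lem201}; it is quoted directly from Segal's paper (his Theorem I) without argument. Your proof is correct and self-contained: the forward direction via the specialisation $(m,n)=(p_k-p_{q+1}+1,\,p_{q+1}-1)$ together with the duality $\pi(x)\ge r \iff x\ge p_r$, and the reverse direction via the choice $k=\pi(m+n)$, $q=\pi(n)$, with the regime $2q\ge k$ dispatched by $\pi(m)\ge\pi(n)$. Both directions are carefully checked, including the domain constraints $m,n\ge2$.

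One small inaccuracy in your closing remark: the bound $q\le(k-1)/2$ is not what is ``needed'' to keep $m\ge2$ in the forward direction --- any $q\le k-2$ would do there. The range $q\le(k-1)/2$ is the natural half-range because \eqref{2.1} is symmetric under $q\mapsto k-1-q$, and it is exactly the complement (for integers) of the case $2q\ge k$ that you dispose of ``for free'' in the reverse direction. This does not affect the validity of the proof.
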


Then, Segal \cite[Theorem II]{segal} used this equivalence to get the following result.

\begin{lem}[Segal] \label{lem202}
If the \emph{HLC} is false for some positive integer $m+n$, then the smallest such value of $m+n$ is the smallest value of $p_k$ for which \eqref{2.1} is false.
\end{lem}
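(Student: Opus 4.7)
The plan is to combine two ingredients: (i) that any minimal counterexample to the HLC must itself be a prime number, and (ii) that for each prime $p_k$, the failure of the HLC with $m+n = p_k$ is equivalent to the failure of inequality \eqref{2.1} at the index $k$. Together these will identify the smallest HLC failure $N^*$ with $p_{k^*}$, where $k^*$ is the smallest index at which \eqref{2.1} breaks down.

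To establish (i), I argue by minimality. Let $N^*$ denote the smallest integer for which the HLC fails; note $N^* \geq 5$, since no decomposition $m + n \leq 4$ with $m, n \geq 2$ gives a failure. Pick a witnessing pair $m, n \geq 2$ with $m + n = N^*$ and $\pi(N^*) > \pi(m) + \pi(n)$; without loss of generality $m \geq n$, so $m \geq 3$. If $N^*$ were composite, then $\pi(N^* - 1) = \pi(N^*)$, and since $\pi(m-1) \leq \pi(m)$, the pair $(m-1, n)$, with $m - 1 \geq 2$ and $n \geq 2$, would witness a counterexample at $N^* - 1$, contradicting minimality. Hence $N^*$ is prime, say $N^* = p_{k_0}$.

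For (ii) I would prove both directions. Forward: given $p_k = m + n$ with $m \geq n \geq 2$ and $\pi(m) + \pi(n) \leq k - 1$, set $q := \pi(n)$; then $q \geq 1$, $\pi(m) \leq k - q - 1$, so $m \leq p_{k-q} - 1$ and $n \leq p_{q+1} - 1$, giving
\begin{displaymath}
p_k = m + n \leq p_{k-q} + p_{q+1} - 2 < p_{k-q} + p_{q+1} - 1,
\end{displaymath}
which is the failure of \eqref{2.1} at $k$; the bound $q \leq (k-1)/2$ follows from $\pi(m) \geq \pi(n) = q$ together with $\pi(m) + \pi(n) \leq k - 1$. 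Backward: assuming $p_k < p_{k-q} + p_{q+1} - 1$ for some $1 \leq q \leq (k-1)/2$, I take $m := p_{k-q} - 1$ and $n := p_k - p_{k-q} + 1$. One then verifies that $m, n \geq 2$ (using $k - q \geq (k+1)/2 \geq 2$), that $\pi(m) = k - q - 1$ (since $p_{k-q} - 1$ is either $2$ or an even integer exceeding $2$), and that $n \leq p_{q+1} - 1$ yields $\pi(n) \leq q$; hence $\pi(m) + \pi(n) \leq k - 1 < \pi(p_k)$, so the HLC fails at $p_k$.

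Combining (i) and (ii): the HLC holds at every prime $p_k$ with $k < k_0$ (by minimality of $N^*$), so by the equivalence just proved \eqref{2.1} holds at each such $k$, while \eqref{2.1} fails at $k_0$. Therefore $k_0$ is the smallest index at which \eqref{2.1} fails, and $N^* = p_{k_0} = p_{k^*}$, as claimed. I expect the main obstacle to be the backward direction of (ii): one must pick the decomposition $(m, n)$ carefully so that $\pi(m) + \pi(n)$ strictly falls below $k$, and in particular verify the claimed value of $\pi$ at $p_{k-q} - 1$---but these are elementary checks under the constraints $k \geq 3$ and $1 \leq q \leq (k-1)/2$.
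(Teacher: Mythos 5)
Your argument is correct, and both ingredients check out. Part (i) — that a minimal counterexample $N^*$ to the HLC must be prime — is sound: if $N^*$ were composite, then $\pi(N^*-1)=\pi(N^*)$, and $(m-1,n)$ (with $m\geq 3$ guaranteed by $m\geq n\geq 2$ and $N^*\geq 5$) would witness a smaller failure. Part (ii) establishes, for each prime $p_k$ with $k\geq 3$, the equivalence ``HLC fails at $p_k$ $\Leftrightarrow$ \eqref{2.1} fails at index $k$,'' which is exactly Segal's Lemma~\ref{lem201} sharpened to a pointwise (per-$p_k$) statement; your choice $m=p_{k-q}-1$, $n=p_k-p_{k-q}+1$ in the backward direction works, and the verification $\pi(p_{k-q}-1)=k-q-1$ and $\pi(n)\leq q$ is correct under the constraint $k-q\geq (k+1)/2\geq 2$. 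Combining these two facts gives the claim. The paper itself does not prove this lemma (it is quoted as Segal's Theorem~II without argument), so there is no proof in the text to compare against; what you have given is a clean self-contained reconstruction that in fact proves Lemma~\ref{lem201} along the way rather than invoking it as a black box, which is a mild structural difference from the paper's presentation but not a different mathematical route.
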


He used a computer to see that the inequality \eqref{2.1} holds for every positive integer $k \leq 9679$; i.e. for every prime number $p_k \leq 101\,081$. Now 
it follows from Lemma \ref{lem202} that the HLC holds for all integers $m,n \geq 2$ with $m+n \leq 101\,081$. In 2001, Panaitopol \cite{pana1} improved Lemma 
\ref{lem201} by showing the following

\begin{lem}[Panaitopol] \label{lem203}
The \emph{HLC} is true if and only if the inequality \eqref{2.1} holds for all integers $k,q$ satisfying $k \geq 9680$ and $34 \leq q \leq (k-1)/27$.
\end{lem}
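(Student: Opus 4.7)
The ``only if'' direction is immediate from Lemma \ref{lem201}: every $(k,q)$ admissible for Panaitopol is also admissible for Segal. For the converse, the plan is to show that \eqref{2.1} on Panaitopol's restricted range already forces \eqref{2.1} on the full Segal range $k \geq 3$, $1 \leq q \leq (k-1)/2$, and then to apply Lemma \ref{lem201}. I would split that Segal range into three sub-regions. First, $3 \leq k \leq 9679$: here \eqref{2.1} is already verified for every admissible $q$ by Segal's computer check reported after Lemma \ref{lem202}. Second, $k \geq 9680$ and $1 \leq q \leq 33$: the right-hand side of \eqref{2.1} is bounded by the fixed constant $p_{34}-1 = 138$, whereas $p_k - p_{k-q}$ grows like $q\log k$. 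Third, $k \geq 9680$ and $(k-1)/27 < q \leq (k-1)/2$: this is the delicate range, since by the Prime Number Theorem $p_{k-q} + p_{q+1}$ approaches $p_k$ to leading order as $q \to (k-1)/2$.

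For the second sub-region I would turn the asymptotic comparison into an explicit one via Rosser--Schoenfeld/Dusart bounds of the form $p_m \geq m(\log m + \log\log m - 1)$ together with a matching upper bound for $p_{k-q}$, and check that the resulting difference exceeds $138$ for every $k \geq 9680$, uniformly in $q \leq 33$. The worst case is $q = 33$; if the cheap bounds do not quite close the gap there, a small finite computation extending Segal's verified range to slightly larger $k$ would finish that sub-case.

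For the third sub-region I would set $a = k-q$ and $b = q+1$, so that $a+b = k+1$ and $a \geq b \geq \lceil (k-1)/27\rceil + 1$, and exploit the convexity of $m \mapsto m\log m$ to obtain a leading-order surplus
\begin{displaymath}
k\log k - a\log a - b\log b \;\geq\; c\,k
\end{displaymath}
for an absolute constant $c > 0$ depending only on the ratio $b/k \geq 1/27$. Dusart-type explicit inequalities for $p_m$ then upgrade this to a genuine lower bound on $p_k - p_a - p_b$; since $k \geq 9680$, the resulting slack easily exceeds the $1$ required by \eqref{2.1}.

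I expect this third sub-region to be the main obstacle, because there the leading terms in the Rosser--Schoenfeld expansions of $p_k$, $p_a$ and $p_b$ cancel, so the whole argument hinges on controlling the sub-leading corrections. Keeping the $\log\log$ and $O(m/\log m)$ terms in hand \emph{uniformly} in $q \in ((k-1)/27,(k-1)/2]$, and in particular handling the boundary $q \approx (k-1)/27$, where $b = q+1$ is still comparatively small and the asymptotic expansions for $p_b$ are least sharp, is the key calculation one has to execute carefully.
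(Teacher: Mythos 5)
The paper does not supply a proof here at all: Lemma \ref{lem203} is quoted directly from Panaitopol's 2001 paper \cite{pana1}, so there is no internal argument to compare against. Your reduction nonetheless has a genuine gap in the second sub-region, and this is the crux.

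For $k \geq 9680$ and $1 \leq q \leq 33$ you want $p_k - p_{k-q} \geq p_{q+1}-1 \leq p_{34}-1 = 138$. You propose to certify this with two-sided Rosser--Schoenfeld/Dusart bounds on individual primes, e.g. $p_m \geq m(\log m + \log\log m - 1)$ against a matching upper bound. This cannot work: the slack between the best known explicit upper and lower bounds for $p_m$ is of order $cm$ (roughly $0.06m$ for the classical Dusart pair, and always at least $\gg m/\log m$), which at $k\approx 9680$ is several hundred, comfortably swallowing both the target $138$ and the heuristic gain $q\log k \approx 380$. Concretely, the lower bound for $p_k$ minus the upper bound for $p_{k-33}$ is negative near $k = 9680$. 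Worse, the set $\{k \geq 9680\}$ is infinite, so your fallback --- ``a small finite computation extending Segal's verified range to slightly larger $k$'' --- cannot close this case either. The correct observation is that under Segal's correspondence $q \leq 33$ forces $n = p_{q+1}-1 \leq 138$, which is covered \emph{uniformly in $m$} by the computational admissibility result that the paper records as \eqref{1.4} (Gordon--Rodemich, and already by Schinzel's earlier bound $\min(m,n)\leq 146$). These results do not come from bounding individual primes; they come from verifying $\rho^{\ast}(n)\leq\pi(n)$ for the finitely many relevant $n$, which is precisely the uniform-in-$m$ input that explicit prime estimates cannot supply.

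Your third sub-region, $(k-1)/27 < q \leq (k-1)/2$, is closer to the truth in outline (it is the same territory as Panaitopol's own ``$m/29\leq n\leq m$'' result, Theorem~1 of \cite{pana1}, and Dusart's refinement \eqref{1.5}), but two remarks. First, the leading terms do \emph{not} cancel: with $a+b=k+1$ and $b/k\geq 1/27$ the quantity $k\log k - a\log a - b\log b$ has a genuine linear surplus, roughly $0.158\,k$ at the boundary $q\approx(k-1)/27$; it is this surplus which must beat the $O(m)$ slack in the explicit bounds, and one should say so. Second, the explicit upper bound you invoke for $p_{q+1}$ would be applied down to $q+1\approx 360$ when $k\approx 9680$, which is below the thresholds of the sharper Dusart constants; one must either use a weaker constant valid for small indices or cover a finite range separately. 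None of this is fatal for sub-region three, but sub-region two is, and a correct proof has to invoke the admissible-set computations \eqref{1.4} there rather than analytic prime estimates.
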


Using Lemmata \ref{lem202} and \ref{lem203} and a computer, Panaitopol \cite{pana1} found that the HLC is true for all integers $m,n \geq 2$ with $m+n 
\leq 3\,497\,861 = p_{250000}$. Extending this computation, we get the following

\begin{prop} \label{prop204}
Let $N_0 = 1.7 \times 10^9$. Then the \emph{HLC} holds for all integers $m,n \geq 2$ satisfying $m+n \leq 39\,708\,229\,123 = p_{N_0}$.
\end{prop}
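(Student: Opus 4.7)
The plan is to combine Lemmas \ref{lem202} and \ref{lem203} with a large-scale extension of the computer check already performed by Segal and Panaitopol, pushing the verified range from $k \leq 250000$ out to $k \leq N_0 = 1.7 \times 10^9$. By Lemma \ref{lem203} the HLC is equivalent to the arithmetical inequality \eqref{2.1} for all $k \geq 9680$ and $34 \leq q \leq (k-1)/27$, and by Lemma \ref{lem202} the smallest counterexample $m+n$, if one existed, would itself be a prime $p_k$ at which \eqref{2.1} first fails. Hence it suffices to verify \eqref{2.1} for every pair $(k,q)$ with $9680 \leq k \leq N_0$ and $34 \leq q \leq (k-1)/27$; once this is done, no counterexample to the HLC can have $m+n \leq p_{N_0} = 39\,708\,229\,123$.

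The first implementation step is to build a table of the primes $p_1,\ldots,p_{N_0}$, or equivalently of the prime gaps $d_j = p_j - p_{j-1}$, up to $p_{N_0} \approx 4 \cdot 10^{10}$. A segmented sieve of Eratosthenes handles this comfortably, and storing gaps instead of primes keeps the memory footprint small. The second step is the verification itself. It is advantageous to make $q$ the outer loop variable, because for fixed $q$ the left-hand side of \eqref{2.1} depends on the sliding window sum $p_k - p_{k-q} = d_{k-q+1} + \cdots + d_k$; as $k$ advances by one, this sum is updated by a single subtraction and a single addition, so each individual test reduces to an $O(1)$ comparison with the constant $p_{q+1}-1$. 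A short preliminary computation covering the range $9680 \leq k < 250000$ already handled by Panaitopol can serve as a sanity check that the code reproduces the known negative result.

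The main obstacle is the sheer volume of pairs: the total count of $(k,q)$ to be inspected is of order $N_0^2/54$, roughly $5 \cdot 10^{16}$. Even with constant-time updates per pair, this requires careful engineering, and the bulk of the effort lies in parallelising the sweep across the range of $q$ and in organising the prime-gap data for cache-friendly sequential access; the memory bandwidth of the gap table, rather than arithmetic cost, is the binding constraint. Once the sweep terminates without exhibiting a violation of \eqref{2.1}, the conclusion of the proposition follows at once from Lemma \ref{lem202}.
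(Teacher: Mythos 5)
Your proposal is exactly the paper's argument: the paper proves Proposition \ref{prop204} by combining Lemmata \ref{lem202} and \ref{lem203} with a computer verification (a C++ program, acknowledged in the paper) of inequality \eqref{2.1} for all $k$ up to $N_0 = 1.7\times 10^9$ and $34 \leq q \leq (k-1)/27$, extending Panaitopol's computation to $k \leq 250000$. Your sliding-window implementation details go beyond what the paper records, but the logical structure and the reduction to the finite check are the same.
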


\section{A Proof of Theorem \ref{thm101}}

First, we set
\begin{equation}
f_c(t) = \frac{t}{\log t - 1 - c/\log t}. \tag{3.1} \label{3.1}
\end{equation}
By \cite[Corollary 3.5]{axler16}, we have $\pi(t) \geq f_1(t)$ for every $t \geq 468049$. Let $b$ a real number with $b \in (1,2)$ and let $B$ a positive real 
number so that $\pi(t) \leq f_b(t)$ for every $x \geq B$. Further, let $r$ and $s$ be positive real numbers with $r \geq s \geq 1$. We set
\begin{equation}
\lambda_b(r,s) = \frac{(b-1)(r+1) - \log(s+1)\log s}{2r \log(1+\frac{1}{r}) + 2\log(s+1)} + \frac{1}{2} \left( \log r - \log \left( 1 + \frac{1}{r} \right) 
\right) \tag{3.2} \label{3.2}
\end{equation}
and
\begin{displaymath}
\eta_b(r,s) = \frac{r\log r - \log s - (1+\log(s+1)\log s)\log(1+\frac{1}{r}) - br\log s}{r \log(1+\frac{1}{r}) + \log(s+1)} + \log \left( 1 + \frac{1}{r} 
\right) \log s.
\end{displaymath}
Then we get the following result.

\begin{prop} \label{prop301}
Let $r$ and $s$ be real numbers with $r \geq s \geq 1$. Let $\chi_b(r,s) = \lambda_b(r,s)^2 + \eta_b(r,s)$ and
\begin{displaymath}
\varphi_b(r,s) = \frac{\emph{sgn}(\chi_b(r,s)) + 1}{2} \cdot \chi_b(r,s).
\end{displaymath}
Then we have $\pi(x+y) \leq \pi(x) + \pi(y)$ for every pair of real numbers $(x,y)$ satisfying $x \geq y \geq 3$,
\begin{equation}
x \geq \max \left\{ \exp(\lambda_b(r,s) + \sqrt{\varphi_b(r,s)}), 468049r, \frac{B}{1+1/r} \right\}, \tag{3.3} \label{3.3}
\end{equation}
and $x/r \leq y \leq x/s$.
\end{prop}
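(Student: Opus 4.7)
\emph{Step 1 (Domain check).} From $y \ge x/r$ and $x \ge 468049\, r$ I obtain $x, y \ge 468049$, so the lower bound $\pi(t) \ge f_1(t)$ recalled before the proposition applies at both $x$ and $y$, giving $\pi(x) \ge f_1(x)$ and $\pi(y) \ge f_1(y)$. From $x \ge B/(1+1/r)$ together with $y \ge x/r$ I get $x+y \ge (1+1/r)x \ge B$, so the upper bound $\pi(x+y) \le f_b(x+y)$ is also available. Hence the proposition reduces to the purely analytic inequality
\[
f_b(x+y) \;\le\; f_1(x) + f_1(y).
\]

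\emph{Step 2 (Reduction to a quadratic in $L := \log x$).} Writing $u = y/x \in [1/r, 1/s]$ and using the monotonicity of $t \mapsto t - 1 - c/t$ for $t > 0$ (together with that of $f_b$ and $f_1$, easily checked in our range), I would bound each factor in the direction that makes the inequality harder: upper-bound $f_b(x+y)$ using $x+y \le x(s+1)/s$ in the numerator and $\log(x+y) \ge L + \log(1+1/r)$ in the denominator, and lower-bound $f_1(y)$ using $y \ge x/r$ in the numerator and $\log y \le L - \log s$ in the denominator. After dividing through by $x$, clearing the remaining fractions, and discarding lower-order terms in $L$ in the favorable direction, the inequality should rearrange into
\[
L^2 - 2\lambda_b(r,s)\, L - \eta_b(r,s) \;\ge\; 0,
\]
with the coefficients emerging precisely as in \eqref{3.2}. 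The main technical obstacle is exactly this bookkeeping: one has to track which of the bounds contributes each of the terms $(b-1)(r+1)$, $\log(s+1)\log s$, and $r\log(1+1/r)$ that appear in \eqref{3.2}, and confirm that the net effect of all the discards preserves the direction of the inequality.

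\emph{Step 3 (Solving the quadratic).} The polynomial $Q(L) := L^2 - 2\lambda_b L - \eta_b$ is an upward-opening parabola with minimum value $-(\lambda_b^2 + \eta_b) = -\chi_b(r,s)$ attained at $L = \lambda_b$. If $\chi_b(r,s) \le 0$, then $Q \ge 0$ for every real $L$, and since $\varphi_b(r,s) = 0$ in this case the hypothesis $L \ge \lambda_b + \sqrt{\varphi_b} = \lambda_b$ is more than enough. If $\chi_b(r,s) > 0$, then $\varphi_b = \chi_b$, the larger root of $Q$ is $\lambda_b + \sqrt{\chi_b} = \lambda_b + \sqrt{\varphi_b}$, and the assumption $L \ge \lambda_b + \sqrt{\varphi_b}$ puts us on the branch where $Q \ge 0$. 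The indicator $(\mathrm{sgn}(\chi_b)+1)/2$ in the definition of $\varphi_b$ is designed exactly to unify these two cases. In either case condition \eqref{3.3} delivers $L \ge \lambda_b + \sqrt{\varphi_b}$, hence $Q(L) \ge 0$, which is the inequality of Step 2, and the proposition follows.
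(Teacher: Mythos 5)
Your Steps 1 and 3 are sound and agree with the paper: the hypotheses in \eqref{3.3} do reduce the problem to $f_b(x+y)\le f_1(x)+f_1(y)$, and your handling of $\chi_b$, $\varphi_b$ and the sign factor is exactly how \eqref{3.3} is meant to be used (the paper likewise extracts $(\log x-\lambda_b(r,s))^2\ge\lambda_b(r,s)^2+\eta_b(r,s)$). The gap is Step 2: it is not carried out, and the specific plan you describe cannot be carried out. You decouple the two occurrences of $y$, bounding the numerator of $f_b(x+y)$ by $x(1+\tfrac1s)$ (via $y\le x/s$) while bounding the numerator of $f_1(y)$ below by $x/r$ (via $y\ge x/r$). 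After dividing by $x$, your target inequality is
\[
\frac{1+\frac1s}{g_b\left(x\left(1+\frac1r\right)\right)}\;\le\;\frac{1}{g_1(x)}+\frac{1/r}{g_1(x/s)},\qquad g_c(t)=\log t-1-\frac{c}{\log t},
\]
whose two sides are $\bigl(1+\tfrac1s\bigr)\frac{1}{\log x}\bigl(1+O(1/\log x)\bigr)$ and $\bigl(1+\tfrac1r\bigr)\frac{1}{\log x}\bigl(1+O(1/\log x)\bigr)$. Whenever $s<r$ (which holds in every application in the proof of Theorem \ref{thm101}), the left side exceeds the right side for all large $x$: the loss is the leading-order amount $(\tfrac1s-\tfrac1r)\,x/\log x$, not a lower-order term, so no bookkeeping can rearrange this into $L^2-2\lambda_b L-\eta_b\ge 0$ --- that quadratic holds for all large $L$, while your intermediate inequality eventually fails.

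The paper avoids this by never decoupling the main terms. It uses the exact identity
\[
f_1(x)+f_1(y)-f_b(x+y)=x\,\frac{\log(1+\frac yx)-\frac{b}{\log(x+y)}+\frac{1}{\log x}}{g_1(x)\,g_b(x+y)}+y\,\frac{\log(1+\frac xy)-\frac{b}{\log(x+y)}+\frac{1}{\log y}}{g_1(y)\,g_b(x+y)},
\]
so the first-order parts cancel identically; only then does it apply $x/r\le y\le x/s$, the positivity of the second numerator (inequality \eqref{3.4}, which is where $x\ge y\ge3$ and the reduction via \eqref{1.5} to $s\ge109$ enter --- hypotheses you never use), and the comparison $1/g_1(y)\ge(1+\frac{\log s}{\log y})/g_1(x)$ of \eqref{3.5}, reducing everything to the single inequality \eqref{3.6}. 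It is \eqref{3.6}, not your crude bound, that is equivalent (after substituting the definitions of $\lambda_b$ and $\eta_b$) to the quadratic condition supplied by \eqref{3.3}. To repair your argument you would need to keep this exact decomposition, or some equivalent cancellation of the $y$-terms, before invoking $x/r\le y\le x/s$.
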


\begin{proof}
By \eqref{1.5}, we can assume that $r \geq s \geq 109$. Let $h(x,y) = \pi(x) + \pi(y) - \pi(x+y)$. We need to show that $h(x,y) \geq 0$. First, we note that
\begin{equation}
\log \left( 1 + \frac{x}{y} \right) - \frac{b}{\log(x+y)} + \frac{1}{\log y} \geq \log 110 - \frac{2}{\log 468049} \geq 0. \tag{3.4} \label{3.4}
\end{equation}
Since $\log(x/y) \geq \log s$, we have
\begin{equation}
\frac{1}{\log y - 1 - \frac{1}{\log y}}
\geq \frac{1 + \frac{\log s}{\log y}}{\log x - 1 - \frac{1}{\log x}}. \tag{3.5} \label{3.5}
\end{equation}
From \eqref{3.3}, it follows that $(\log x - \lambda_b(r,s))^2 \geq \lambda_b(r,s)^2 + \eta_b(r,s)$. Substituting the definition of $\eta_b(r,s)$ into the last 
inequality, we see that
\begin{align*}
\log^2x & - 2\lambda_b(r,s)\log x - \log \left( 1 + \frac{1}{r} \right) \log r \geq \frac{r\log r - \log s - (1+\log(s+1)\log s)\log(1+\frac{1}{r}) - br\log 
s}{r \log(1+\frac{1}{r}) + \log(s+1)}. 
\end{align*}
Let $\kappa(r,s) = r \log(1+1/r) + \log(s+1)$. Then we can use \eqref{3.2} to get that the last inequality is equivalent to
\begin{align*}
\kappa(r,s)\log \left(x + \frac{x}{r} \right)\log \frac{x}{r} & - b(r+1)\log \frac{x}{s} + r \log \frac{x}{r} \\
& \geq (b-1)\log s - (1+\log(s+1)\log s)\log \left( x+\frac{x}{r} \right).
\end{align*}
Since $x/r \leq y \leq x/s$, we get
\begin{displaymath}
\frac{\kappa(r,s)}{r} - \frac{b(1+\frac{1}{r})}{\log(x+y)} + \frac{1}{\log(x+y)} \geq \frac{(b-1)\log s}{r\log(x+y)\log y} - \frac{1}{r\log y} - 
\frac{\log(s+1)\log s}{r\log y}.
\end{displaymath}
Hence
\begin{displaymath}
\frac{\kappa(r,s)}{r} - \frac{b(1+\frac{1}{r})}{\log(x+y)} + \frac{1}{\log x} \geq \frac{b\log s}{r\log(x+y)\log y} - \frac{\log s}{r\log^2 y} - \frac{1}{r\log 
y} - \frac{\log(s+1)\log s}{r\log y}.
\end{displaymath}
Now we substitute the definitoin of $\kappa(r,s)$ to obtain the inequality
\begin{displaymath}
\log \left( 1+\frac{1}{r} \right) - \frac{b}{\log(x+y)} + \frac{1}{\log x} + \frac{1}{r} \left( \log \left( 1 + \frac{x}{y} \right) - \frac{b}{\log(x+y)} + 
\frac{1}{\log y} \right) \left( 1 + \frac{\log s}{\log y} \right) \geq 0.
\end{displaymath}
Therefore,
\begin{equation}
\log \left( 1+\frac{y}{x} \right) - \frac{b}{\log(x+y)} + \frac{1}{\log x} + \frac{1}{r} \left( \log(s+1) - \frac{b}{\log(x+y)} + \frac{1}{\log y} \right) 
\left( 1 + \frac{\log s}{\log y} \right) \geq 0. \tag{3.6} \label{3.6}
\end{equation}
Next, we note that $y \geq x/r \geq 468049$ and $x+y \geq x(1+1/r) \geq B$. Hence $h(x,y) \geq f_1(x) + f_1(y) - f_b(x+y)$, where $f_c(t)$ is defined as in 
\eqref{3.1}. Setting $g_c(t) = \log t - 1 - c/\log t$, we see that
\begin{displaymath}
h(x,y) \geq x \left( \frac{\log(1 + \frac{y}{x}) - \frac{b}{\log(x+y)} + \frac{1}{\log x}}{g_1(x)g_b(x+y)} \right) + y \left( \frac{\log(1 + \frac{x}{y}) - 
\frac{b}{\log(x+y)} + \frac{1}{\log y}}{g_1(y)g_b(x+y)} \right).
\end{displaymath}
Now we can use \eqref{3.4} and \eqref{3.5} to get the inequality
\begin{displaymath}
h(x,y) \geq x \left( \frac{\log(1 + \frac{y}{x}) - \frac{b}{\log(x+y)} + \frac{1}{\log x} + \frac{1}{r}(\log(1 + \frac{x}{y}) - \frac{b}{\log(x+y)} + 
\frac{1}{\log y})(1 + \frac{\log s}{\log y})}{g_1(x)g_b(x+y)} \right).
\end{displaymath}
Finally it suffices to apply the inequality \eqref{3.6}.
\end{proof}

Now we use Propositions \ref{prop204} and \ref{prop301} to give the following proof of Theorem \ref{thm101}.

\begin{proof}[Proof of Theorem \ref{thm101}]
We set $b = 1.15$. By \cite[Corollary 1]{axler2017}, we can choose $B = 38\,284\,442\,297$. In addition, we substitute the following explicit values for $r$ 
and $s$ into Proposition \ref{prop301} to get $\pi(x+y) \leq \pi(x) + \pi(y)$ for every $x \geq x_0$ and $x/r \leq y \leq x/s$, where $x_0$ is equal to the 
least integer greater than or equal to the right-hand side of \eqref{3.3}:

\begin{center}
\begin{tabular}{|l||c|c|c|c|c|}
\hline
$r$  \rule{0mm}{4mm} &              $1950$ &         $1949.9652$ &         $1949.8838$ &         $1949.6933$ &         $1949.2476$ \\ \hline
$s$  \rule{0mm}{4mm} &         $1949.9652$ &         $1949.8838$ &         $1949.6933$ &         $1949.2476$ &         $1948.2049$ \\ \hline
$x_0$\rule{0mm}{4mm} & $38\,284\,409\,814$ & $38\,284\,393\,330$ & $38\,284\,407\,670$ & $38\,284\,394\,575$ & $38\,284\,419\,151$ \\ \hline \hline
$r$\rule{0mm}{4mm}   &         $1948.2049$ &         $1945.7667$ &         $1940.0707$ &         $1926.7942$ &         $1896.0125$ \\ \hline
$s$\rule{0mm}{4mm}   &         $1945.7667$ &         $1940.0707$ &         $1926.7942$ &         $1896.0125$ &         $1825.5323$ \\ \hline 
$x_0$\rule{0mm}{4mm} & $38\,284\,398\,522$ & $38\,284\,417\,850$ & $38\,284\,399\,116$ & $38\,284\,426\,596$ & $38\,284\,405\,535$ \\ \hline \hline
$r$\rule{0mm}{4mm}   &         $1825.5323$ &         $1668.8817$ &         $1344.8932$ &         $ 785.8821$ &         $ 189.9788$ \\ \hline
$s$\rule{0mm}{4mm}   &         $1668.8817$ &         $1344.8932$ &         $ 785.8821$ &         $ 189.9788$ &         $ 109     $ \\ \hline
$x_0$\rule{0mm}{4mm} & $38\,284\,440\,640$ & $38\,284\,412\,784$ & $38\,284\,406\,728$ & $38\,284\,305\,355$ & $38\,083\,977\,941$ \\ \hline
\end{tabular}
\end{center}
In particular, we see that $\pi(m+n) \leq \pi(m) + \pi(n)$ for every $m \geq 38\,284\,440\,640$ and $m/1950 \leq n \leq m/109$. If $m \leq 38\,284\,440\,640$ 
and $m/1950 \leq n \leq m/109$, we get $m+n \leq (1+1/109)m \leq 39\,708\,229\,123$ and the result follows from Proposition \ref{prop204}. The remaining case 
where $m,n \geq 2$ and $m/109 \leq n \leq m$ is a direct consequence of \eqref{1.4} and \eqref{1.5}.
\end{proof}

\section{A Proof of Theorem \ref{thm102}}

In this section, we use explicit estimates for the prime counting function $\pi(x)$ to give the following proof of Theorem \ref{thm102}.

\begin{proof}[Proof of Theorem \ref{thm102}]
If $\e \in [1/1950, 1]$, the result follows from Theorem \ref{thm101}. So, let $\e \in (0, 1/1950)$ and let $m,n \geq 2$ be integers with $\e m \leq n \leq m$ 
and $m \geq e^{\sqrt{0.3426/\log(1+\e)}}$. Then $m \geq 168\,527\,259\,431$. Hence,
\begin{displaymath}
\log(1+\e) \geq \frac{0.3426}{\log^2m} \geq \frac{0.3}{\log^2m} + \frac{1.1}{\log^3m}.
\end{displaymath}
Hence
\begin{displaymath}
\log(m+n) - 1 - \frac{1}{\log(m+n)} - \frac{3.15}{\log^2 m} - \frac{14.25}{\log^3 m} \geq \log m - 1 - \frac{1}{\log m} - \frac{2.85}{\log^2 m} - 
\frac{13.15}{\log^3 m}.
\end{displaymath}
Now we can use \cite[Corollary 3]{axler2017} to see that
\begin{equation}
\frac{m}{\log(m+n) - 1 - \frac{1}{\log(m+n)} - \frac{3.15}{\log^2(m+n)} - \frac{14.25}{\log^3(m+n)}} \leq \pi(m). \tag{4.1} \label{4.1}
\end{equation}
Since $\log 2 > 3.15/\log^2m + 14.25/\log^3m$, we get
\begin{displaymath}
\log(m+n) - 1 - \frac{1}{\log(m+n)} - \frac{3.15}{\log^2(m+n)} - \frac{14.25}{\log^3(m+n)} \geq \log n - 1 - \frac{1}{\log n}. \tag{4.2} \label{4.2}
\end{displaymath}
Note that the function $f(x) = xe^{\sqrt{0.3426/\log(1+x)}}$ is decreasing on the interval $(0, 1/1950)$. Hence we get $n \geq \e m \geq f(1/1950) \geq 
86\,424\,235$. If we combine the inequality \eqref{4.2} with Corollary 3.5 of \cite{axler16}, it turns out that the inequality
\begin{equation}
\frac{n}{\log(m+n) - 1 - \frac{1}{\log(m+n)} - \frac{3.15}{\log^2(m+n)} - \frac{14.25}{\log^3(m+n)}} \leq \pi(n) \tag{4.3} \label{4.3}
\end{equation}
holds. By \cite[Corollary 1]{axler2017}, we have
\begin{displaymath}
\pi(m+n) \leq \frac{m+n}{\log(m+n) - 1 - \frac{1}{\log(m+n)} - \frac{3.15}{\log^2(m+n)} - \frac{14.25}{\log^3(m+n)}}
\end{displaymath}
and it suffices to apply \eqref{4.1} and \eqref{4.3}.
\end{proof}

\section{A Proof of Theorem \ref{thm103}}

Let $k$ be a positive integer and $\e$ be positive real number. By Panaitopol \cite{pan3}, there exist positive real numbers $a_1, \ldots, a_k$ and two 
positive real numbers $\alpha_k$ and $\beta_k = \beta_k(\e)$ so that
\begin{equation}
\pi(x) \geq \frac{x}{\log x - 1 - \sum_{j=1}^k \frac{a_j}{\log^jx}} \q\q (x \geq \alpha_k) \tag{5.1} \label{5.1}
\end{equation}
and
\begin{equation}
\pi(x) \leq \frac{x}{\log x - 1 - \sum_{j=1}^k \frac{a_j}{\log^jx} - \frac{\e}{\log^kx}} \q\q (x \geq \beta_k). \tag{5.2} \label{5.2}
\end{equation}
Further, let $\gamma_k = \gamma_k(\e)$ be the smallest positive integer so that
\begin{displaymath}
\log 2 \geq \frac{\e}{\log^kx} + \sum_{j=1}^k \frac{a_j}{\log^jx}
\end{displaymath}
for every $x \geq \gamma_k$. Then we obtain the following result.

\begin{prop} \label{prop401}
Let $k$ be a positive integer and $\e, c$ be positive real numbers with $c > \e$. Then $\pi(x+y) \leq \pi(x) + \pi(y)$ for all real numbers $x,y \geq 2$ with 
$x 
\geq \max \{\alpha_k, \beta_k, \gamma_k, \exp(\sqrt[k]{c^2/(2(c - \e))}) \}$ and
\begin{displaymath}
\max \left\{ 5393, \frac{cx}{\log^k x} \right\} \leq y \leq x.
\end{displaymath}
\end{prop}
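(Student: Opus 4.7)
The strategy parallels the proof of Proposition 3.1. Set
\[
f_k(t) := \log t - 1 - \sum_{j=1}^k \frac{a_j}{\log^j t}, \qquad g_k(t) := f_k(t) - \frac{\e}{\log^k t}.
\]
The hypotheses $x \geq \alpha_k$, $x+y \geq x \geq \beta_k$, and $x+y \geq \gamma_k$ let one invoke \eqref{5.1} at $x$ and \eqref{5.2} at $x+y$, while the condition $y \geq 5393$ permits the auxiliary Dusart-type lower bound $\pi(y) \geq y/(\log y - 1)$. Combining these yields
\[
\pi(x) + \pi(y) - \pi(x+y) \geq \frac{x}{f_k(x)} + \frac{y}{\log y - 1} - \frac{x+y}{g_k(x+y)},
\]
so the task reduces to showing that the right-hand side is nonnegative.

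I would split the right-hand side as
\[
x\left(\frac{1}{f_k(x)} - \frac{1}{g_k(x+y)}\right) + y\left(\frac{1}{\log y - 1} - \frac{1}{g_k(x+y)}\right).
\]
For the second bracket, a direct computation gives
\[
g_k(x+y) - (\log y - 1) = \log\!\left(1 + \frac{x}{y}\right) - \sum_{j=1}^k \frac{a_j}{\log^j(x+y)} - \frac{\e}{\log^k(x+y)} \geq \log 2 - \log 2 = 0
\]
by $y \leq x$ and the definition of $\gamma_k$; so this bracket is $\geq 0$, and one can lower-bound it more precisely by a positive quantity of order $y\log(x/y)/\log^2(x+y)$ after inserting $y \geq cx/\log^k x$. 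For the first bracket,
\[
g_k(x+y) - f_k(x) = \log\!\left(1 + \frac{y}{x}\right) + \sum_{j=1}^k a_j\!\left(\frac{1}{\log^j x} - \frac{1}{\log^j(x+y)}\right) - \frac{\e}{\log^k(x+y)},
\]
which is bounded below by $y/(2x) - \e/\log^k x$. Hence whenever $y \geq 2\e x/\log^k x$ both brackets are nonnegative and the claim follows immediately.

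The remaining subcase $cx/\log^k x \leq y < 2\e x/\log^k x$, which can occur only when $c < 2\e$, is where the threshold $x \geq \exp(\sqrt[k]{c^2/(2(c-\e))})$ enters. In this regime the first bracket is negative but small, and one must prove that the positive contribution of the second bracket dominates it. After substituting the worst case $y = cx/\log^k x$ and collecting terms, the inequality should reduce — to leading order in $1/\log x$ — to $2(c-\e)\log^k x \geq c^2$, which is exactly the hypothesized threshold, with the $\gamma_k$-condition absorbing the lower-order tail corrections from the $a_j/\log^j$ sums. The main technical obstacle is executing this final reduction cleanly: tracking how each $a_j/\log^j$ correction interacts with $\log(1+y/x)$, $\log(1+x/y)$, and the $\e$-error from $g_k$, and verifying that after all simplifications the quadratic-in-$c$ balance $2(c-\e)\log^k x \geq c^2$ genuinely suffices throughout the entire range $cx/\log^k x \leq y \leq x$.
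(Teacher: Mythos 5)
Your setup and decomposition are exactly the paper's: bound $\pi(x)$ below via \eqref{5.1}, $\pi(y)$ below via Dusart's $\pi(t)\geq t/(\log t-1)$ for $t\geq 5393$, $\pi(x+y)$ above via \eqref{5.2}, split $\frac{x+y}{g_k(x+y)}$ into an $x$-part and a $y$-part, and your treatment of the second bracket (nonnegativity from $x\geq y$ and the definition of $\gamma_k$) is correct and is precisely the paper's step \eqref{5.4}. The gap is in the first bracket. You lower-bound
\begin{displaymath}
g_k(x+y)-f_k(x)\;\geq\;\frac{y}{2x}-\frac{\e}{\log^k x},
\end{displaymath}
which is needlessly lossy (you replaced $\log(1+t)\geq t-t^2/2$ by the cruder $\geq t/2$ with $t=y/x$), and this creates a spurious ``remaining subcase'' $cx/\log^k x\leq y<2\e x/\log^k x$ that you do not close: you only assert that ``to leading order'' the inequality should reduce to $2(c-\e)\log^k x\geq c^2$ and name the verification as the main technical obstacle. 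That deferred computation is the actual content of the proposition in your route, and it is not clear it even works at the stated finite threshold, because with your crude bound the compensation you need from the second bracket involves terms like $\log(\log^k x/(2c))$ and the $a_j$-corrections, not the clean quadratic balance you predict.

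The fix --- and the paper's actual argument --- removes the case split entirely. Since $y\geq cx/\log^k x$, apply $\log(1+t)\geq t-t^2/2$ at $t=c/\log^k x$ (not at $t=y/x$):
\begin{displaymath}
\log\left(1+\frac{y}{x}\right)\;\geq\;\log\left(1+\frac{c}{\log^k x}\right)\;\geq\;\frac{c}{\log^k x}-\frac{c^2}{2\log^{2k} x}\;\geq\;\frac{\e}{\log^k x}\;\geq\;\frac{\e}{\log^k(x+y)},
\end{displaymath}
where the third inequality is \emph{exactly} the hypothesis $x\geq\exp\bigl(\sqrt[k]{c^2/(2(c-\e))}\bigr)$, i.e.\ $c^2/(2\log^{2k}x)\leq (c-\e)/\log^k x$. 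Together with $\sum_j a_j/\log^j(x+y)\leq\sum_j a_j/\log^j x$ this makes your first bracket nonnegative throughout the whole range $cx/\log^k x\leq y\leq x$, so both brackets are $\geq 0$ and no compensation argument is needed. With that replacement your proof is complete and coincides with the paper's.
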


\begin{proof}
Since $x \geq \exp(\sqrt[k]{c^2/(2(c - \e))})$, we have
\begin{displaymath}
\frac{c}{\log^kx} - \frac{c^2}{2\log^{2k}x} \geq \frac{\e}{\log^kx}.
\end{displaymath}
Using the inequality $\log(1+t) \geq t - t^2/2$, which holds for every $t \geq 0$, we see that
\begin{displaymath}
\log (x+y) - \log x \geq \log \left( 1+ \frac{c}{\log^kx} \right) \geq \frac{\e}{\log^kx}.
\end{displaymath}
If we combine the last inequality with \eqref{5.1}, it turns out that
\begin{equation}
\frac{x}{\log (x+y) - 1 - \sum_{j=1}^k \frac{a_j}{\log^j(x+y)} - \frac{\e}{\log^k(x+y)}} \leq \frac{x}{\log x - 1 - \sum_{j=1}^k \frac{a_j}{\log^j x}} \leq 
\pi(x). \tag{5.3} \label{5.3}
\end{equation}
On the other hand, we have $y \leq x$ and $x \geq \gamma_k$. Hence
\begin{equation}
\frac{y}{\log (x+y) - 1 - \sum_{j=1}^k \frac{a_j}{\log^j(x+y)} - \frac{\e}{\log^k(x+y)}} \leq \frac{y}{\log y - 1}. \tag{5.4} \label{5.4}
\end{equation}
By Dusart \cite[p.\:55]{dusart}, we have $\pi(t) \geq t/(\log t - 1)$ for every $t \geq 5393$. Applying this to \eqref{5.4}, we get
\begin{equation}
\frac{y}{\log (x+y) - 1 - \sum_{j=1}^k \frac{a_j}{\log^j(x+y)} - \frac{\e}{\log^k(x+y)}} \leq \pi(y). \tag{5.5} \label{5.5}
\end{equation}
By \eqref{5.2}, we have
\begin{displaymath}
\pi(x+y) \leq \frac{x+y}{\log (x+y) - 1 - \sum_{j=1}^k \frac{a_j}{\log^j(x+y)} - \frac{\e}{\log^k(x+y)}}
\end{displaymath}
and it suffices to apply \eqref{5.3} and \eqref{5.5}.
\end{proof}

\begin{proof}[Proof of Theorem \ref{thm103}]
Let $k=2$. We set $a_1 = 1$ and $a_2 = 2.85$. By \cite[Corollary 3]{axler2017}, we can choose $\alpha_2 = 38\,099\,531$. Further, we set $\e = 
0.70863503301170907614119$. Then we can use \cite[Theorem 2]{axler2017} to see that \eqref{5.2} holds for every $x \geq \beta_2 = 14\,000\,264\,036\,190\,262$. 
A simple calculation shows that $\gamma_2 = 23$. Now let $c = c_0$. Substituting these values into Proposition \ref{prop301}, it turns out that the inequality 
$\pi(m+n) \leq \pi(m) + \pi(n)$ holds for all integers $m \geq n \geq 2$ satisfying $m \geq 14\,000\,264\,036\,190\,263$ and $n \geq cm/\log^2 m$. If $m \leq 
14\,000\,264\,036\,190\,262$, the claim follows from Theorem \ref{thm101}.
\end{proof}

\section{A Proof of Theorem \ref{thm104}}

First, we note some results of Dusart \cite{dusart2018} concerning the distance of $\pi(x)$ and $\text{li}(x)$.

\begin{prop}[Dusart]
For every real $x$ with $2 \leq x \leq 10^{20}$, we have
\begin{equation}
\pi(x) \leq \emph{li}(x), \tag{6.1}  \label{6.1}
\end{equation}
and for every real $x$ satisfying $1\,090\,877\leq x \leq 10^{20}$, we have
\begin{equation}
\emph{li}(x) - \frac{2\sqrt{x}}{\log x} \leq \pi(x). \tag{6.2}  \label{6.2}
\end{equation}
\end{prop}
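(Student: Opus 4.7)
This proposition is a quantitative statement about $\pi(x)$ and $\text{li}(x)$ in a finite interval, so the plan is to split $[2,10^{20}]$ into a \emph{computational} subrange and an \emph{analytic} subrange and handle each separately.

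For the computational subrange I would use tabulated exact values of $\pi(x)$ (known well beyond $10^{20}$ from the algorithms of Del\'eglise--Rivat, Platt, B\"uthe, Staple, and others) to verify (6.1) and (6.2) at a grid of sample points. Since $\pi$ is a non-decreasing integer-valued step function whose only jumps occur at primes, and $\text{li}$ is smooth and strictly increasing, an inequality that holds at grid points propagates to the entire interval between them provided the grid is fine enough and one accounts for the jumps of $\pi$. In particular, for (6.1) one only needs to exclude a sign change of $\pi - \text{li}$ on $[2, 10^{20}]$, which is far below the Skewes region and has been checked in the literature; for (6.2) one keeps track of the extra quantitative margin $2\sqrt{x}/\log x$ at the jumps.

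For the analytic subrange the engine is the Riemann--von Mangoldt explicit formula
\[
\psi(x) = x - \sum_{\rho} \frac{x^{\rho}}{\rho} - \log(2\pi) - \tfrac{1}{2}\log\!\left(1 - x^{-2}\right),
\]
combined with partial summation to transfer bounds from $\psi$ to $\pi - \text{li}$. Invoking Platt's numerical verification of the Riemann hypothesis up to a height $T_0$ sufficient for $x \leq 10^{20}$, together with B\"uthe's smoothed version of the explicit formula, produces an explicit bound of shape
\[
\left| \pi(x) - \text{li}(x) \right| \leq \frac{\sqrt{x}}{\log x}\,(\text{small explicit constant}) + (\text{negligible tail}),
\]
valid throughout the analytic subrange. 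Combined with the computational verification at a few anchor points, this yields both (6.1) and (6.2) in the claimed ranges.

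The main obstacle is the analytic step: the explicit formula has to be bookkept very carefully so that the sum over the numerically verified zeros, the tail contribution of unverified zeros above height $T_0$, and the passage from $\psi$ to $\pi$ all combine into an inequality sharp enough to reproduce the \emph{exact} constant $2$ and the \emph{exact} threshold $1\,090\,877$ in (6.2). Pinning down that precise threshold — rather than some comfortably larger cutoff where the analytic estimates become easy — is what distinguishes this proposition from softer asymptotic bounds, and is the delicate numerical/analytic content carried out in Dusart's paper.
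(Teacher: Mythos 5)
The paper does not actually prove this proposition: it is stated as a quotation of Dusart and the ``proof'' is a one-line citation to \cite[Lemma 2.2]{dusart2018}, which in turn rests on B\"uthe's analytic method for bounding $\pi(x)$ under a partial verification of the Riemann hypothesis (Platt's computation of the zeros up to a large height). Your sketch correctly identifies the two ingredients that appear in that literature --- exact computation of $\pi$ for small $x$ and an explicit-formula argument driven by the verified zeros for large $x$ --- so as a description of where the result comes from it is essentially right.

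As a proof, however, it has a genuine gap beyond the one you concede. The grid-tabulation argument you propose for the ``computational subrange'' cannot be pushed anywhere near $10^{20}$: to propagate $\pi(x) \leq \text{li}(x)$ between grid points $x_i < x_{i+1}$ you need $\pi(x_{i+1}) \leq \text{li}(x_i)$, and since the available margin $\text{li}(x) - \pi(x)$ is only of order $\sqrt{x}$ in this range, the grid spacing near $10^{20}$ would have to be on the order of $10^{10}$, i.e.\ roughly $10^{10}$ exact evaluations of $\pi$ at arguments of size $10^{20}$ --- computationally out of reach. This is precisely why the actual result is obtained analytically over essentially the whole range: B\"uthe's theorem converts the verification of RH up to height $T$ into bounds of the form $|\pi(x) - \text{li}(x)| \leq \frac{\sqrt{x}}{8\pi}\log x$ (and sharper) valid for all $x$ up to roughly $T^2$, with direct computation needed only for genuinely small $x$ (which is where the exact threshold $1\,090\,877$ in \eqref{6.2} is located and can be checked by brute force). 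Your ``analytic subrange'' paragraph gestures at this but leaves the constant $2$ in \eqref{6.2} and the cutoff unestablished; since that quantitative bookkeeping is the entire content of the cited lemma, the proposal does not constitute a proof --- though, to be fair, neither does the paper, which simply defers to Dusart.
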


\begin{proof}
See \cite[Lemma 2.2]{dusart2018}.
\end{proof}

Now we use this result to find the following proof of Theorem \ref{thm104}.


\begin{proof}[Proof of Theorem \ref{thm104}]
By Theorems \ref{thm101} and \ref{thm103}, it suffices to consider the case where $n$ satisfies
\begin{displaymath}
2 \sqrt{m} \leq n \leq m \times \min \left \{ \frac{1}{1950}, \frac{c_0}{\log^2 m} \right \},
\end{displaymath}
where $c_0$ is given as in Theorem \ref{thm103}. If $m \leq 39\,687\,876\,365$, we get $m+n \leq (1+/1950)m \leq 39\,708\,229\,123$ and the result follows from 
Proposition \ref{prop204}. So we can assume that $m \geq 39\,687\,876\,366$. Using \eqref{6.1}, we see that $\pi(m+n) \leq \text{li}(m+n)$. Now we can use the 
mean value theorem to see that $\pi(m+n) \leq \text{li}(m) + n/\log m$.
Applying \eqref{6.2} to this inequality, we get
\begin{displaymath}
\pi(m+n) \leq \pi(m) + \frac{2\sqrt{m}}{\log m} + \frac{n}{\log m},
\end{displaymath}
which is equivalent to
\begin{equation}
\pi(m+n) \leq \pi(m) + \frac{2\sqrt{m}}{\log m} + \frac{n}{\log n - 1} - \frac{n(\log(m/n) + 1)}{\log m (\log n - 1)}. \tag{6.3} \label{6.3}
\end{equation}
Since $m \geq 39\,687\,876\,366$, we have $n \geq 887\,293$.So we can apply the inequality including $\pi(x)$ given in \cite[p.\: 55]{dusart} to \eqref{6.3} 
and get
\begin{equation}
\pi(m+n) \leq \pi(m) + \pi(n) + \frac{2\sqrt{m}}{\log m} - \frac{n(\log(m/n) + 1)}{\log m (\log n - 1)}. \tag{6.4} \label{6.4}
\end{equation}
In order to prove the theorem, we consider the following three cases.

\textit{Case} 1. $\sqrt{m}\log m/\log \log m \leq n \leq c_0m/\log^2 m$. \newline
In this first case, the inequality \eqref{6.4} implies that
\begin{equation}
\pi(m+n) \leq \pi(m) + \pi(n) + \frac{2\sqrt{m}}{\log m} - \frac{n(2 \log \log m - c_3)}{\log m (\log m - 2\log \log m + c_3)}, \tag{6.5} 
\label{6.5}
\end{equation}
where $c_3 = \log(c_0) - 1 = -1.17409\ldots$. The assumption $n \geq \sqrt{m} \log m/\log \log m$ implies that
\begin{displaymath}
\frac{n(2 \log \log m - c_3)}{\log m (\log m - 2\log \log m + c_3)} \geq \frac{2\sqrt{m}}{\log m}.
\end{displaymath}
Applying this to \eqref{6.5}, we obtain the inequality $\pi(m+n) \leq \pi(m) + \pi(n)$.

\textit{Case} 2. $2 \sqrt{m} (1 + 4 \log \log m/\log m) \leq n \leq \sqrt{m}\log m/\log \log m$. \newline
Here, the inequality \eqref{6.4} implies that
\begin{equation}
\pi(m+n) \leq \pi(m) + \pi(n) + \frac{2\sqrt{m}}{\log m} - \frac{n(\log(\sqrt{m}\log \log m/\log m) + 1)}{\log m (\log(\sqrt{m}\log m/\log \log m) - 1)}, 
\tag{6.6} \label{6.6}
\end{equation}
We have
\begin{displaymath}
n \geq 2\sqrt{m} \left( 1 + \frac{4 \log \log m}{\log m}\right) \geq 2\sqrt{m} \times \frac{\log (\sqrt{m}\log m/\log \log m) - 1}{\log(\sqrt{m}\log \log 
m/\log m) + 1}.
\end{displaymath}
Applying this to \eqref{6.4}, we see that the inequality $\pi(m+n) \leq \pi(m) + \pi(n)$ holds.

\textit{Case} 3. $2 \sqrt{m} \leq n \leq 2 \sqrt{m} (1 + 4 \log \log m/\log m)$. \newline
Let $r(x) = 1 + 4\log \log x/\log x$. In this latter case, a simple calculation shows that
\begin{displaymath}
n \geq 2\sqrt{m} \geq 2\sqrt{m} \times \frac{\log(2\sqrt{m}r(m)) - 1}{\log(\sqrt{m}/(2r(m))) + 1} \geq 2 \sqrt{m} \times \frac{\log n - 1}{\log(m/n) + 1}.
\end{displaymath}
Now we apply this to \eqref{6.4} to get the required inequality.

\textit{Case} 4. $2 \sqrt{m}(1-2c_1/(\log m + c_1)) \leq n \leq 2 \sqrt{m}$. \newline
In this latter case, we have
\begin{displaymath}
n \geq 2\sqrt{m} \left( 1 - \frac{2c_1}{\log m + c_1} \right) = 2\sqrt{m} \times \frac{\log(2\sqrt{m}) - 1}{\log(\sqrt{m}/2) + 1} \geq 2 \sqrt{m} 
\times \frac{\log n - 1}{\log(m/n) + 1}.
\end{displaymath}
Finally, we apply this to \eqref{6.4} to arrive at the end of the proof.
\end{proof}

\section{A Proof of Theorem \ref{thm105}}

Under the assumption that the Riemann hypothesis is true, Schoenfeld \cite[Corollary 1]{schoenfeld} showed that
\begin{displaymath}
|\pi(x) - \text{li}(x)| \leq \frac{\sqrt{x}}{8 \pi} \, \log x
\end{displaymath}
for every $x \geq 2657$. In 2018, Dusart \cite[Proposition 2.6]{dusart2018} found the following refinement.

\begin{prop}[Dusart] \label{prop701}
If the Riemann hypothesis is true, then
\begin{displaymath}
|\pi(x) - \emph{li}(x)| \leq \frac{\sqrt{x}}{8 \pi} \, \log \left( \frac{x}{\log x} \right)
\end{displaymath}
for every real $x \geq 5639$.
\end{prop}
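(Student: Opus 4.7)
The plan is to follow the classical Abel-summation route that converts a bound on $\pi(x) - \text{li}(x)$ into a bound on $\theta(x) - x$ (equivalently $\psi(x) - x$ under RH), and then to apply a sharpening of Schoenfeld's explicit formula from which the factor $\log(x/\log x)$ emerges in place of the usual $\log x$.

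First I would derive the identity
\[
\pi(x) - \text{li}(x) = \frac{\theta(x) - x}{\log x} + \int_2^x \frac{\theta(t) - t}{t \log^2 t}\, dt - c_0,
\]
where $c_0 = 2/\log 2 - \text{li}(2)$ is an explicit constant. This follows from Abel summation applied to $\pi(x) = \sum_{p \leq x}(\log p)/\log p$ combined with one integration by parts on $\int_2^x dt/\log t$. Next I would pass from $\theta$ to $\psi$ using an explicit bound such as $|\psi(t) - \theta(t)| \leq 1.4262\sqrt{t}$, valid for all $t \geq 121$, so that any bound on $|\psi(t) - t|$ transfers to a bound on $|\theta(t) - t|$ with a controlled error $O(\sqrt{t})$. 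Substituting Schoenfeld's original inequality $|\psi(t) - t| \leq \sqrt{t}\log^2 t/(8\pi)$ produces the boundary contribution $\sqrt{x}\log x/(8\pi)$ and an integral contribution of only $\sqrt{x}/(4\pi) + O(\sqrt{x}/\log x)$, since $\int_2^x dt/\sqrt{t} = 2\sqrt{x} - 2\sqrt{2}$; this recovers Schoenfeld's bound with $\log x$, but not yet the refinement.

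To improve $\log x$ to $\log(x/\log x)$ one must save a further $\sqrt{x}\log\log x/(8\pi)$ beyond what the naive estimate provides, which is the central difficulty and cannot be extracted from Schoenfeld's inequality as stated. The natural source of the saving is a refined version of the explicit formula $\psi(x) - x = -\sum_{|\gamma|\leq T} x^{\rho}/\rho + R(x,T)$: by choosing the truncation $T$ as a function of $x$ tuned slightly below Schoenfeld's $\asymp \sqrt{x}\log x$ and controlling the zero sum via the Riemann--von Mangoldt bound $N(T) = (T/2\pi)\log(T/2\pi e) + O(\log T)$, one expects to obtain an inequality of the form $|\psi(x) - x| \leq \sqrt{x}\log x \log(x/\log x)/(8\pi)$ valid once $x$ exceeds some larger threshold $X_0$. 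Propagating this sharper input through the Abel identity above yields the claimed bound. The main obstacle is making this refinement quantitatively effective all the way down to $x = 5639$, where the inequality is essentially tight; on the transitional range $5639 \leq x \leq X_0$ one closes the gap by direct numerical verification using tabulated values of $\pi(x)$ and high-precision evaluations of $\text{li}(x)$, which is the computational core of Dusart's argument in \cite{dusart2018}.
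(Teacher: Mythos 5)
First, a remark on context: the paper itself does not prove this proposition. It is quoted verbatim from Dusart \cite[Proposition 2.6]{dusart2018} (just as the unconditional bounds \eqref{6.1}--\eqref{6.2} are quoted from his Lemma 2.2), so there is no internal proof to compare against; what you have written is an attempt to reconstruct Dusart's argument from scratch.

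As such a reconstruction, your sketch has a genuine gap, and you half-concede it yourself. The entire content of the statement is the replacement of Schoenfeld's $\log x$ by $\log(x/\log x)$, i.e.\ a saving of $\tfrac{\sqrt{x}}{8\pi}\log\log x$. Your third paragraph locates this saving in a hypothetical refinement $|\psi(x)-x|\leq \tfrac{\sqrt{x}}{8\pi}\log x\,\log(x/\log x)$ that you do not prove (``one expects to obtain'') and for which you give no threshold; that is not a proof step but a restatement of the difficulty one level down. Worse, even granting that refinement, your propagation fails: in the identity $\pi(x)-\text{li}(x)=\tfrac{\theta(x)-x}{\log x}+\int_2^x\tfrac{\theta(t)-t}{t\log^2t}\,dt+C$, the refined bound makes the boundary term alone already equal to the target $\tfrac{\sqrt{x}}{8\pi}\log(x/\log x)$, while the integral, estimated by absolute values as in your second paragraph, still contributes an extra $\tfrac{\sqrt{x}}{4\pi}+O(\sqrt{x}/\log x)$ for which no room remains. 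The same obstruction already invalidates your second paragraph's claim to ``recover'' Schoenfeld's Corollary~1: the triangle-inequality computation yields $\tfrac{\sqrt{x}}{8\pi}(\log x+2)$, not $\tfrac{\sqrt{x}}{8\pi}\log x$ (and the additive constant is $+\big(\tfrac{2}{\log 2}-\text{li}(2)\big)\approx +1.84$, not the negative quantity you wrote). In the Schoenfeld--Dusart line of argument the saving comes from treating the integral term with cancellation rather than absolute values --- e.g.\ via $\int_2^x(\psi(t)-t)\,\tfrac{dt}{t}=-\sum_\rho x^\rho/\rho^2+O(1)$ with $\sum_\rho|\rho|^{-2}=2+\gamma-\log 4\pi\approx 0.046$ under RH --- together with the negative correction $\psi(t)-\theta(t)\geq 0.98\sqrt{t}$; none of this machinery appears in your outline. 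Finally, deferring $5639\leq x\leq X_0$ to ``numerical verification'' is vacuous while $X_0$ is unspecified: making the asymptotic part effective with a small explicit $X_0$ is precisely the quantitative core of the result, not a footnote to it.
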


We use Proposition \ref{prop701} to find the following proof of Theorem \ref{thm105}.

\begin{proof}[Proof of Theorem \ref{thm105}]
If $m \leq 5 \times 10^{19}$, then $m + n \leq 2m \leq 10^{20}$ and the result follows directly from Theorem \ref{thm104}. So it suffices to consider the case 
where $m \geq 5\times 10^{19}$. In order to prove the theorem, we consider the following three cases.

\textit{Case} 1. $n \geq c_2\sqrt{m}\log^3 m$. \newline
If $n \geq c_0m/\log^2m$, where $c_0$ is given as in Theorem \ref{thm103}, the result follows directly from Theorem \ref{thm103}. Hence we can assume that 
$c_2\sqrt{m}\log^3 m \leq n \leq c_0m/\log^2 m$. By Proposition \ref{prop701}, we have $\pi(m+n) \leq \text{li}(m+n) + f(m+n)$, where $f(t) = (1/(8\pi)) 
\sqrt{t} \log(t/\log t)$. Now we use the mean value theorem to get
\begin{displaymath}
\pi(m+n) \leq \text{li}(m) + \frac{n}{\log m} + f(m) + \frac{n}{16\pi\sqrt{m}} \left( \log \left( \frac{m}{\log m} \right) + 2 \right).
\end{displaymath}
Next we apply Proposition \ref{prop701} to obtain the inequality
\begin{displaymath}
\pi(m+n) \leq \pi(m) + \frac{n}{\log m} + 2f(m) + \frac{n}{16\pi\sqrt{m}} \left( \log \left( \frac{m}{\log m} \right) + 2 \right),
\end{displaymath}
which is equivalent to
\begin{displaymath}
\pi(m+n) \leq \pi(m) + 2f(m) + \frac{n}{16\pi\sqrt{m}} \left( \log \left( \frac{m}{\log m} \right) + 2 \right) + \frac{n}{\log n - 1} - n \,\frac{\log(m/n) + 
1}{\log m (\log n - 1)}.
\end{displaymath}
Since $m \geq 5 \times 10^{19}$, we have $n \geq c_2 \sqrt{m} \log^3m \geq 52\,511\,298\,895\,885$. So we can apply the inequality including $\pi(x)$ given in 
\cite[p.\:55]{dusart} to the last inequality and get
\begin{equation}
\pi(m+n) \leq \pi(m) + \pi(n) + 2f(m) + \frac{n}{16\pi\sqrt{m}} \left( \log \left( \frac{m}{\log m} \right) + 2 \right) - n \, \frac{\log(m/n) + 1}{\log m 
(\log n - 1)}. \tag{7.1} \label{7.1}
\end{equation}
Since $c_2\sqrt{m}\log^3 m \leq n \leq c_0m/\log^2m$, we see that
\begin{displaymath}
\pi(m+n) \leq \pi(m) + \pi(n) + 2f(m) + \frac{c_0\sqrt{m}}{16\pi\log^2 m} \left( \log \left( \frac{m}{\log m} \right) + 2 \right) - n \, 
\frac{2 \log \log m - c_3}{\log m (\log m  - 2\log \log m + c_3)},
\end{displaymath}
where $c_3 = \log(c_0) - 1 = -1.17409\ldots$. Now we substitute the definition of $f(t)$ to get $\pi(m+n) \leq \pi(m) + \pi(n) + g(m,n)$, where
\begin{displaymath}
g(m,n) = \frac{\sqrt{m}}{4\pi} \left( \left( 1 + \frac{c_0}{4\log^2m} \right) \log \left( \frac{m}{\log m} \right) + \frac{c_0}{2\log^2 m} \right) - n \, 
\frac{2 \log \log m - c_3}{\log m (\log m  - 2\log \log m + c_3)}.
\end{displaymath}
Clearly, it suffices to show that $g(m,n) \leq 0$. This inequality is equivalent to
\begin{equation}
n \geq \frac{\sqrt{m}}{4\pi} \left( \left( 1 + \frac{c_0}{4\log^2m} \right) \log \left( \frac{m}{\log m} \right) + \frac{c_0}{2\log^2 m} \right) 
\frac{\log m (\log m  - 2\log \log m + c_3)}{2 \log \log m - c_3}. \tag{7.2} \label{7.2}
\end{equation}
Since
\begin{displaymath}
- \log \log m + \frac{c_0}{4\log^2m} \log \left( \frac{m}{\log m} \right) + \frac{c_0}{2\log^2 m} \leq 0,
\end{displaymath}
the inequality $n \geq c_2 \sqrt{m} \log^3 m$ implies \eqref{7.2} and we get $\pi(m+n) \leq \pi(m) + \pi(n)$.

\textit{Case} 2. $c_2 \sqrt{m}\log m \log(m\log^{13}m) \leq n \leq c_2 \sqrt{m} \log^3 m$. \newline
From \eqref{7.1}, it follows that
the inequality $\pi(m+n) \leq \pi(m) + \pi(n)$ holds if
\begin{equation}
n \geq \left( c_2 \sqrt{m} \log \left( \frac{m}{\log m} \right) + \frac{c_2}{16\pi} \left( \log \left( \frac{m}{\log m} \right) + 2 \right)\log^3 m \right) 
\frac{\log m (\log(c_2\sqrt{m}\log^3m) - 1)}{\log(\sqrt{m}/(c_2\log^3 m)) + 1}. \tag{7.3} \label{7.3}
\end{equation}
We have
\begin{displaymath}
\frac{c_2}{16\pi} \left( \log \left( \frac{m}{\log m} \right) + 2 \right)\log^3 m \leq c_2 \sqrt{m} \log \log m
\end{displaymath}
and
\begin{displaymath}
\frac{\log(c_2\sqrt{m}\log^3m) - 1}{\log(\sqrt{m}/(c_2\log^3 m)) + 1} \leq 1 + \frac{13\log \log m}{\log m}.
\end{displaymath}
So if $n$ fulfills the inequality $n \geq c_2 \sqrt{m}\log m \log(m\log^{13}m)$, we get the inequality \eqref{7.3}. Hence we have $\pi(m+n) \leq 
\pi(m) + \pi(n)$.

\textit{Case} 3. $c_2 \sqrt{m}\log m \log(m\log^8m) \leq n \leq c_2 \sqrt{m}\log m \log(m\log^{13}m)$. \newline
We use \eqref{7.1} to see that
the inequality $\pi(m+n) \leq \pi(m) + \pi(n)$ holds if
\begin{equation}
n \geq \left( c_2 \sqrt{m} \log \left( \frac{m}{\log m} \right) + \frac{c_2}{16\pi} \left( \log \left( \frac{m}{\log m} \right) + 2 \right)\log m 
\log(m\log^{13}m) \right) h(m) \log m, \tag{7.4} \label{7.4}
\end{equation}
where
\begin{displaymath}
h(m) = \frac{\log(c_2\sqrt{m}\log m \log(m\log^{13}m)) - 1}{\log(\sqrt{m}/(c_2\log m \log(m\log^{13}m))) + 1}.
\end{displaymath}
Note that
\begin{displaymath}
\frac{c_2}{16\pi} \left( \log \left( \frac{m}{\log m} \right) + 2 \right)\log m \log(m\log^{13}m) \leq c_2 \sqrt{m} \log \log m
\end{displaymath}
and $h(m) \leq 1 + 8\log \log m/\log m$. So the inequality $n \geq c_2 \sqrt{m}\log m \log(m\log^8m)$ implies the inequality \eqref{7.4} and we arrive at 
the end of the proof.
\end{proof}

%
%

\section{Appendix: The Incompatibility of the HLC and the Prime $k$-tuples Conjecture}

To formulate the Prime $k$-tuples Conjecture, we first introduce the following definition.

\begin{defi}
A $k$-tuple of distinct integers $b_1, \ldots, b_k$ is \textit{admissible} if for each prime $p$, there is some congruence class mod $p$ which contains none of 
the $b_i$.  
\end{defi}

\begin{prim}
Let $b_1, \ldots, b_k$ be an admissible $k$-tuple of integers. Then there exist infinitely many positive integers $n$ for which all of the values $n + b_1, 
\ldots, n + b_k$ are prime.
\end{prim}

\begin{rema}
The Prime $k$-tuples Conjecture is a special case of Schinzel's Hypothesis H \cite[p.\:188]{schinzel}.
\end{rema}

In order to show that the HLC and the Prime $k$-tuples Conjecture are incompatible, Hensley and Richards \cite{hensley} used the following function which 
was introduced by Schinzel and Sierpi\'nski \cite[p.\:201]{schinzel}.

\begin{defi}
Let the function $\rho^{\ast}: \N \to \N$ be defined by
\begin{displaymath}
\rho^{\ast}(m) = \max_{n \in \N} |\{ k \in \N \mid n < k \leq m+n, \text{gcd}(k, m!) = 1 \}|. 
\end{displaymath}
This function describes the maximum number of positive integers in each interval $(n,m+n]$ that are relatively prime to all positive integers less than or 
equal to $m$.
\end{defi}

Under the assumption that the Prime $k$-tuples Conjecture is true, Schinzel and Sierpi\'nski \cite[pp.\: 204--205]{schinzel} found the identity
\begin{equation}
\rho^{\ast}(m) = \limsup_{n \to \infty} (\pi(m+n) - \pi(n)). \tag{8.1} \label{8.1}
\end{equation}
Hensley and Richards \cite[p.\:380]{hensley} proved that for every real number $\e$ there exists a $m_0(\e)$ so that
\begin{displaymath}
\rho^{\ast}(m) - \pi(m) \geq (\log 2 - \e) \times \frac{m}{\log^2 m}
\end{displaymath}
for every $m \geq m_0(\e)$. In particular, the last inequality gives
\begin{equation}
\lim_{m \to \infty} (\rho^{\ast}(m) - \pi(m)) = \infty. \tag{8.2} \label{8.2}
\end{equation}
So if the Prime $k$-tuples Conjecture is true, we can combine \eqref{8.1} and \eqref{8.2} to see that for every sufficiently large values of $m$ there exist 
infinitly many positive integers $n$ so that the inequality
\begin{displaymath}
\pi(m+n) > \pi(m) + \pi(n)
\end{displaymath}
holds which contadicts the HLC.

\section*{Acknowledgement}

I would like to express my great appreciation to Thomas Leßmann for writing a C++ program in order to verify Proposition \ref{prop204}. I would also like to 
thank Pierre Dusart whose PhD thesis inspired me to deal with this subject. Furthermore, I thank R. for being a never-ending inspiration.

\end{document}